\newtheorem{theorem}{Theorem}[section]
\newtheorem{lemma}[theorem]{Lemma}
\newtheorem{corollary}[theorem]{Corollary}
\newtheorem{proposition}[theorem]{Proposition}
\theoremstyle{remark}
\newtheorem{remark}{Remark}
\renewcommand{\d}{{\mathrm d}}
\newcommand{\ord}{\operatorname{ord}}
\newcommand{\Z}{\mathbb Z}
\providecommand{\alaplus}{\genfrac{}{}{0pt}{}{}{+}} 
\providecommand{\aladots}{\genfrac{}{}{0pt}{}{}{\cdots}} 
\providecommand{\doK}[1]{\vcenter{#1\kern.2ex\hbox{\normalfont\text{K}}\kern.2ex}}
\title{Euler's factorial series, Hardy integral, and continued fractions}
\author{Anne-Maria Ernvall-Hyt\"onen}
\address{Anne-Maria Ernvall-Hyt\"onen, Mathematics and Statistics, P.O. Box 68, 00014 University of Helsinki, Finland}
\email{anne-maria.ernvall-hytonen@helsinki.fi}
\author{Tapani Matala-aho}
\address{Tapani Matala-aho, Aalto University, Department of Mathematics, P.O. Box 11100, FI-00076 Aalto, Finland }
\email{tapani.matala-aho@aalto.fi}
\author{Louna Sepp\"al\"a}
\address{Louna Sepp\"al\"a, Mathematics and Statistics, P.O. Box 68, 00014 University of Helsinki, Finland}
\email{louna.seppala@helsinki.fi}
\begin{document}

\begin{abstract}
We study $p$-adic Euler's series $E_p(t) = \sum_{k=0}^{\infty}k!t^k$ at a point $p^a$, $a \in \Z_{\ge 1}$, and use Pad\'e approximations 
to prove a lower bound for the $p$-adic absolute value of the expression $cE_p\left(\pm p^a\right)-d$, where $c, d \in \Z$. 
It is interesting that the same Pad\'e polynomials which $p$-adically converge to $E_p(t)$, approach the Hardy integral 
$\mathcal{H}(t) = \int_{0}^{\infty} \frac{e^{-s}}{1-ts}\d s$ on the Archimedean side. 
This connection is used with a trick of analytic continuation when deducing an Archimedean bound for the numerator Pad\'e polynomial 
needed in the derivation of the lower bound 
for $|cE_p\left(\pm p^a\right)-d|_p$.
Furthermore, we present an interconnection between $E(t)$ and $\mathcal{H}(t)$ via continued fractions. 
\end{abstract}

\maketitle

\section{Introduction}

\subsection{Background}

The series
\begin{equation}\label{Eulerfactorial}
E(t) := \sum_{k=0}^{\infty}k!t^k,
\end{equation}
named after Euler, converges in the standard $p$-adic metric given by $|p|_p=p^{-1}$ for a prime $p$, when $t$ belongs to the disc 
$\left\{z\in\mathbb{C}_p \; \left| \; |z|_p < p^{\frac{1}{p-1}} \right. \right\}$. 
Hence a function $E_p$ is defined on that disc via the $p$-adic values $E_p(z):=\sum_{n=0}^\infty n!z^n$.
The $p$-adic completion of the rationals $\mathbb{Q}$ with respect to the metric $|\cdot|_p$ is denoted by $\mathbb{Q}_p$,
while $\mathbb{C}_p$ denotes the completion of the algebraic closure of $\mathbb{Q}_p$.
The notation $\overline{\mathbb{C}}:=\mathbb{C}\cup \{\infty\}$ is used for the one point compactification of $\mathbb{C}$.

The integral
\begin{equation*}
\mathcal{H}(t) = \int_{0}^{\infty} \frac{e^{-s}}{1-ts}\d s
\end{equation*}
is a companion to the series \eqref{Eulerfactorial} in the sense that its Taylor series expansion at the origin is $E(t)$.
Even though the integral $\mathcal{H}(t)$ is not defined on the positive real axis, it can be analytically continued there: there exists a global multivalued function, say $\widehat{\mathcal{H}}(t)$, which is analytic on the Riemann surface of the logarithm function and which on $\overline{\mathbb{C}}\setminus[0,\infty]$ coincides with $\mathcal{H}(t)$ (see Section \ref{Analyticcontinuation}).
Note that $\mathcal{H}(t)$ is also defined at the origin but is not analytic there because 
the radius of convergence of its Taylor series expansion $E(t)$ is zero.

Not much is known about the nature of the values of the
$p$-adic Euler's series $E_p(t) = \sum_{k=0}^{\infty}k!t^k$ at non-zero rational points $t\in\mathbb{Q}^*$ for any prime $p$.
Are they rational or irrational, and if irrational, are they algebraic or transcendental? It is a folklore expectation that 
both $E_p(\pm 1) = \sum_{k=0}^{\infty}k!(\pm 1)^k$ are transcendental, although even their irrationality has not been proved yet.
Of course, if we knew the $p$-adic expansion of some number to be ultimately periodic, then that number would be rational.
However, the periodicity of the $p$-adic expansion of, say, $E_p(-1) = \sum_{k=0}^{\infty} k!(-1)^k$ is an open question for all primes. 
The $p$-adic expansion is given by $E_p(-1) = \sum_{k=0}^{\infty} a_k p^k$, where the coefficients $a_k=a_k(p)\in\{0,1,\ldots,p-1\}$
are called the digits in base $p$. 
For example, the first 15 digits $a_0a_1\ldots a_{14}$ of $E_2(-1)$ and $E_3(-1)$ are $001111010011001$ and $221000110102102$, respectively. 

Irrationality and transcendence questions are ultimately related to the theory of Diophantine approximations
which focuses on lower and upper bounds of linear forms with algebraic coefficients.
In this work, our aim is to derive $p$-adic lower bounds for the linear form $cE_p\left(\pm p^a\right)-d$ with integer coefficients $c,d$,
where $a$ is a positive integer.
To give a flavour of our results (outlined in more detail in Section \ref{sec:newresults}): Let $p$ be a prime number and  $H\in\mathbb{Z}_{\ge 4}$. 
Suppose that $a$ is a positive integer such that
$
p^a > c_1 \log \left( c_2 H \right),
$
where $c_1,c_2$ are constants given in Corollary \ref{Cormin}.
Then, for all $c,d \in \Z$, $c \neq 0$ with $|c|+|d|\le H$, there holds
\begin{equation}\label{lowerboundintro}
\left| cE_p\left(\pm p^a\right)-d \right|_p > \left( 2H e^{\frac{11}{16}} \right)^{ - \frac{32}{11} a \log p}.
\end{equation}
In particular, we see that $E_p\left(\pm p^a\right)$ is not a rational number $d/c$ satisfying $|c|+|d|\le H$.

There exists an explicit version of the value $\mathcal{H}(-1)$ related to $E_p(-1) = \sum_{k=0}^{\infty}k!(-1)^k$: 
already G.\ H.\ Hardy \cite[formula 2.4.6]{Hardy1973} proved that
\begin{equation}\label{}
\mathcal{H}(-1) =  - e \left( \gamma  + \sum_{n=1}^{\infty}  \frac{(-1)^n}{n\cdot n!} \right) = 0.5963473623\ldots,
\end{equation}
where the right-hand side is a real number known as the Euler-Gompertz constant and often denoted by $\delta$.
The irrationality of the Euler-Gompertz constant $\delta$ as well as that of Euler's gamma constant 
$\gamma=0.5772156649\ldots$ is an open question. 
As noted by Rivoal \cite{Rivoal2012}, it follows from Mahler's work \cite{Mahler1968} that at least one of the constants $\gamma$ and $\delta$ is transcendental.
Rivoal also presented another proof to this fact and showed that the pair $(\gamma, \delta)$ cannot be very well approximated with a pair of rational numbers having the same denominator.
Various other mathematical developments related to Euler's gamma $\gamma$ and the Euler-Gompertz constant $\delta$ are thoroughly explored by J.\ Lagarias in his extensive article \cite{Lagarias2013}. 
 
The correspondence between the functions $E(t)$ and $\mathcal{H}(t)$ runs even deeper, as indicated by Matala-aho and Zudilin \cite{TAWA2018}. 
Namely, both $E(t)$ and $\mathcal{H}(t)$ (more generally $\widehat{\mathcal{H}}(t)$) share the common Pad\'e polynomials 
(see Section \ref{padechapter}).
On the $p$-adic side, these shared Pad\'e approximations converge towards $E(t)$, and on the Archimedean side towards $\mathcal{H}(t)$ 
with appropriate values of $t$. 
A realisation of this is a nice interconnection between $E(t)$ and $\mathcal{H}(t)$ via the continued fraction
\begin{equation}\label{contfrac}
\frac{1}{1-t-\frac{t^2}{1-3t-\frac{2^2t^2}{1-5t+\ldots} } } := \frac{a_1}{b_1 + \frac{a_2}{b_2 + \frac{a_3}{b_3 +\ldots} } },
\end{equation}
where $a_1=1$, $b_1=1-t$, $a_k=(k-1)^2t^2$ and $b_k=1-(2k-1)t$ for $k\in\mathbb{Z}_{\ge 2}$.

\begin{theorem}\label{CFpadic+real}
Let $p\in\mathbb{P}$ and $t\in\mathbb{C}_p$, $|t|_p<1$. Then, in the field of $p$-adic numbers,
\begin{equation}\label{pcontinued}
\frac{1}{1-t-\frac{t^2}{1-3t-\frac{2^2t^2}{1-5t+\ldots} } } = \sum_{k=0}^{\infty}k!t^k.
\end{equation}
Let $t\in\mathbb{R}$, $t\le 0$. Then, in the field of real numbers,
\begin{equation}\label{rcontinued}
\frac{1}{1-t-\frac{t^2}{1-3t-\frac{2^2t^2}{1-5t+\ldots} } } = \int_{0}^{\infty} \frac{e^{-s}}{1-ts}\d s.
\end{equation}
\end{theorem}

For the basics of generalized continued fractions and the proof of Theorem \ref{CFpadic+real}, see 
Section \ref{Generalized continued fractions}.

\begin{remark}\label{pcremark}
We shall prove that the continued fraction on the left-hand side of equation \eqref{pcontinued} converges in the disk
$\left\{t\in\mathbb{C}_p \; \left| \; |t|_p < 1 \right. \right\}$. 
As mentioned above, the Euler series on the right-hand side of equation \eqref{pcontinued} converges in the larger disk
$\left\{t\in\mathbb{C}_p \; \left| \; |t|_p < p^{\frac{1}{p-1}} \right. \right\}$.
Not much is known about the convergence of the continued fraction in \eqref{pcontinued} on the unit circle 
$\left\{t\in\mathbb{C}_p \; \left| \; |t|_p = 1 \right. \right\}$. 
In the case $t=-1$, there is an infinite sequence of rational numbers $p_n/q_n$ converging 
$p$-adically towards $E_p(-1) = \sum_{k=0}^{\infty}k!(-1)^k$ and in Archimedean metrics towards
$\mathcal{H}(-1)$ (see \cite{TAWA2018}). The sequence $(p_n/q_n)$ is related to a subsequence of the convergents 
of the continued fraction \eqref{pcontinued} via the above mentioned Pad\'e approximations.
\end{remark}

\subsection{Objectives}

Inspired by Salah-Eddine Remmal \cite{Remmal980}, we study $p$-adic lower bounds for linear forms
\begin{equation}\label{linform}
\Lambda_p := c E_p\left(\pm p^a\right)-d, \quad c,d \in \Z,\quad c \neq 0,
\end{equation}
in the $p$-adic values $E_p(\pm p^a)$ of Euler's series
$$
E(t) = \sum_{n=0}^\infty n!t^n
$$
at a point $t=\pm p^a$, where $p$ is a prime and $a\in\mathbb{Z}_{\ge 1}$. 

Our target is two-fold: we shall revise Remmal's lower bound result, but also introduce a method for making the bounds as sharp as possible using Pad\'e approximations. 
We shall apply Pad\'e approximations to the series $E(t)$ generated by the identity 
$Q_{l}(t)E(t) - P_{l}(t) = R_{l}(t)$, $l\in \mathbb{Z}_{\ge 1}$; see \eqref{PADE3}.
In the Archimedean metrics, the methods producing such lower bounds use upper bounds of the denominator polynomials $Q_{l}(t)$ only.
However, on the $p$-adic side, we need Archimedean bounds for the numerator polynomials $P_{l}(t)$, too.
A good estimate for $Q_{l}(t)$ is easily gained by a contour integral;  
to get a reasonably good bound for $P_{l}(t)$, we apply the Archimedean `Pad\'e identity'  
$Q_l(t) \mathcal{H}(t) - P_l(t) = \mathcal{R}_l(t)$
for $\mathcal{H}(t)$. So, we need estimates for $\mathcal{H}(t)$ and the remainder $\mathcal{R}_l(t)$.
In the case $t<0$, this is straightforward. The case $t>0$ is a bit more challenging because $\mathcal{H}(t)$ is not even defined then. 
The method of analytic continuation will do the trick here.
In Section \ref{Chapterestimates}, it is shown that
$Q_l(t) \mathcal{H}^{\tau}(t) - P_l(t) = \mathcal{R}_l^{\tau}(t)$,
where $\tau :=\{s=y(1+bi)\; | \; y:0\to \infty\}$, $b\in\mathbb{R}^+$, and 
\begin{equation*}
\mathcal{H}^{\tau}(t) = \int_{\tau} \frac{e^{-s}}{1-ts}\d s,\quad
\mathcal{R}_l^{\tau}(t) =  l! t^{2l} \int_{\tau} \frac{e^{-s}s^l}{(1-ts)^{l+1}} \d s   
\end{equation*} 
are analytic continuations of $\mathcal{H}(t)$ and $\mathcal{R}_l(t)$, respectively.
Now $\mathcal{H}^{\tau}(t)$ and $\mathcal{R}_l^{\tau}(t)$ are defined on the positive real axis and may be estimated by their integral representations. Then we optimize the bounds with respect to $b$. It seems that this method is new and hopefully can be applied also elsewhere.

\subsection{Earlier Diophantine results}\label{Diophantine}

Let $\mathbb{K}$ be any number field and let $F_v(t)$ denote the value of a series $F(t)\in\mathbb{K}[[t]]$ at a point $t$ in the $v$-adic domain $\mathbb{K}_v$.
Let $P\in \mathbb{K}[x_1, \ldots, x_m]$ be a polynomial in $m$ variables and suppose 
$F_1(t),\ldots, F_m(t)\in \mathbb{K}[[t]]$ are power series. 
Take a $\xi\in\mathbb{K}$. A relation $P(F_1(\xi), \ldots, F_m(\xi)) = 0$ is called global if it holds in all the fields $\mathbb{K}_v$, where all the series $F_1(\xi), \ldots, F_m(\xi)$ converge.
Let $\mathbb{Z}_{\mathbb{K}}$ denote the ring of integers in $\mathbb{K}$ and let
$V_0$ and $V_{\infty}$ denote the set of non-Archimedean and Archimedean valuations, respectively.

Euler series is a member of the class of $F$-series introduced by Chirski\u\i\ in \cite{Chirski1989}.
In the same and subsequent works, Chirski\u\i\ also answered the problem of the existence of global relations between members of the 
class of $F$-functions. 
Later Bertrand, Chirski\u\i, and Yebbou \cite{bertrandetal2004} refined the results by effectively estimating the prime $p$ for which there exists a valuation $v|p$ breaking the global relation:
Suppose $h_i\in\mathbb{Z}_{\mathbb{K}}$ are not all zero and $F_0(t) \equiv 1, F_1(t), \ldots, F_m(t)$ are $F$-series that are 
linearly independent over $\mathbb{K}(z)$ and constitute a solution to a differential system $D$, and $\xi\in\mathbb{K}\setminus\{0\}$ 
is an ordinary point of the system $D$. Write
$$
\Lambda_m(t):=h_0F_0(t) + h_1F_1(t) + \ldots + h_mF_m(t)
$$
and assume that
\begin{equation*}
\prod_{v \in V_\infty} \max_{0 \le i \le m} \left\{ \| h_i \|_v \right\} \le H.
\end{equation*}
Then there exists an infinite collection of intervals each containing a prime number $p$ such that for some valuation $v|p$ it holds
\begin{equation}\label{BCYbound}
\left\| \Lambda_m(\xi) \right\|_{v} > H^{-(m+1) - c(m)/\sqrt{\log\log H} },
\end{equation}
where $c(m)$ is a positive constant depending on $m$. (For the definition of $F$-series and their interplay with polyadic numbers, see the discussion in \cite{Chirski2019}.)

Write
$F_k(t):=E(\alpha_k t)$ for $k=1,\ldots,m$, where $\alpha_1, \ldots, \alpha_m \in \mathbb{Z}_{\mathbb{K}} \setminus \{0\}$ are
$m \ge 1$ pairwise distinct, non-zero algebraic integers.
Then $F_k(t)$, $k=1,\ldots,m$, constitute a system of $F$-functions satisfying the above assumptions. Consequently, the linear form
$
h_0 + h_1 E_v(\alpha_1) + \ldots + h_m E_v(\alpha_m)$ 
satisfies bound \eqref{BCYbound}. 
This result was improved and made totally explicit by Sepp\"al\"a \cite{Seppala2019}.   
By assuming $\log H \ge se^s$,
where $s$ is explicitly given in terms of $m$ , $\alpha_1, \ldots, \alpha_m$ and $\kappa = [\mathbb{K} : \mathbb{Q}]$, 
she showed that there exists a prime
\begin{equation*}\label{interval}
p \in \left] \log \left( \frac{\log H}{\log \log H} \right), \frac{17m\log H}{\log \log H} \right[
\end{equation*}
and a valuation $v|p$ for which
\begin{equation*}\label{LSbound}
\left\| h_0 + h_1 E_v(\alpha_1) + \ldots + h_m E_v(\alpha_m) \right\|_{v} > H^{-(m+1) - 114 m^2 
\cdot  \frac{\log \log \log H}{\log \log H}}.
\end{equation*}

Matala-aho and Zudilin \cite{TAWA2018} in turn proved a completely different result which was generalized by 
Ernvall-Hyt\"onen, Matala-aho, and Sepp\"al\"a \cite{AMLOTA2019} as follows:

\begin{proposition}\label{AB} \cite[Theorem 3]{AMLOTA2019} 
Let $\xi \in \mathbb{Z}\setminus\{0\}$ and suppose $h_0,h_1\in \mathbb{Z}$ are not both zero.
Assume $T\subseteq\mathbb{P}$ is a subset of primes such that the set $T\setminus S$ satisfies the condition 
\begin{equation*}\label{riistajaehto}
\limsup_{n\to\infty} c^nn!\prod_{p\in T\setminus S} |n!|_p^2=0,
\ \text{where}\ c=c(\xi;R):=4|\xi|\prod_{p\in R}|\xi|_p^2
\end{equation*}
for any finite subset $S$ of $T$.
Then there exist infinitely many primes $p\in T$ such that
$$
h_0 + h_1 E_p(\xi) \ne 0.
$$
\end{proposition}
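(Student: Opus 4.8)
The plan is to argue by contradiction, turning a hypothetical global vanishing of the linear form into a rational integer that is simultaneously too small archimedeanly and too divisible $p$-adically to be nonzero, and then to rule out its vanishing by the non-degeneracy of the Padé construction. We may assume $h_1\neq 0$, since otherwise $\Lambda_{1,E,p}=h_0$ is a nonzero constant and the claim is trivial. Suppose, contrary to the assertion, that only finitely many primes $p\in T$ satisfy $\Lambda_{1,E,p}\neq 0$, and collect the exceptional ones into a finite set $S\subset T$. Then $h_0+h_1E_p(\xi)=0$, that is $E_p(\xi)=-h_0/h_1$, holds in $\Q_p$ for every $p\in T\setminus S$. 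After clearing denominators so that $P_l,Q_l\in\Z[t]$, I would attach to each $l\in\Z_{\ge 1}$ the rational integer
\[
\Omega_l:=h_0Q_l(\xi)+h_1P_l(\xi)\in\Z,
\]
and evaluate the Padé identity $Q_l(t)E(t)-P_l(t)=R_l(t)$ $p$-adically at $t=\xi$. Substituting $E_p(\xi)=-h_0/h_1$ gives $\Omega_l=-h_1R_{l,p}(\xi)$ in $\Q_p$ for every $p\in T\setminus S$, where $R_{l,p}(\xi)$ denotes the $p$-adic value of the remainder.

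Next I would feed the three local estimates into the product formula $\prod_v|\Omega_l|_v=1$, which holds whenever $\Omega_l\neq 0$. For the $p$-adic places with $p\in T\setminus S$, the explicit expansion $R_l(t)=\sum_{m\ge 0}\tfrac{((l+m)!)^2}{m!}t^{2l+m}$ shows that every coefficient of $R_l$ is divisible by $(l!)^2$ (indeed $\tfrac{((l+m)!)^2}{m!}=(l!)^2\binom{l+m}{m}(l+1)\cdots(l+m)$), so that $R_{l,p}(\xi)=(l!)^2\xi^{2l}U_{l,p}(\xi)$ with $U_{l,p}(\xi)$ a convergent series of integer coefficients and hence $|U_{l,p}(\xi)|_p\le 1$; consequently $|\Omega_l|_p=|h_1|_p\,|R_{l,p}(\xi)|_p\le|l!|_p^2\,|\xi|_p^{2l}$. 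At the remaining primes the integrality of $\Omega_l$ gives only $|\Omega_l|_p\le 1$. The archimedean place is handled by upper bounds of the form $|Q_l(\xi)|,|P_l(\xi)|\le\kappa\,l!\,(4|\xi|)^l$, where the bound for $Q_l$ comes from a contour-integral representation and the one for $P_l$ from the companion identity $Q_l(t)\mathcal H(t)-P_l(t)=\mathcal R_l(t)$ together with estimates of $\mathcal H$ and $\mathcal R_l$. Multiplying, and collecting the finitely many primes of $T$ dividing $\xi$ into $R$ (so that $\prod_{p\in T\setminus S}|\xi|_p^{2l}=(\prod_{p\in R}|\xi|_p^2)^l$), yields
\[
\prod_v|\Omega_l|_v\le\kappa'\,c^l\,l!\prod_{p\in T\setminus S}|l!|_p^2,\qquad c=4|\xi|\prod_{p\in R}|\xi|_p^2 .
\]

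By hypothesis the right-hand side tends to $0$, so $\prod_v|\Omega_l|_v<1$ for all large $l$; the product formula then forces $\Omega_l=0$, hence $R_{l,p}(\xi)=0$, for all large $l$. To close the argument I would invoke the near-diagonal Padé determinant $Q_l(t)P_{l+1}(t)-Q_{l+1}(t)P_l(t)=c_l\,t^{e_l}$ with $c_l\neq 0$ and $e_l\ge 1$: if $R_{l,p}(\xi)=R_{l+1,p}(\xi)=0$, then $Q_l(\xi)E_p(\xi)=P_l(\xi)$ and $Q_{l+1}(\xi)E_p(\xi)=P_{l+1}(\xi)$, so $Q_l(\xi)P_{l+1}(\xi)-Q_{l+1}(\xi)P_l(\xi)=0$, i.e.\ $c_l\xi^{e_l}=0$, which is impossible for $\xi\neq 0$. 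Thus two consecutive $\Omega_l$ cannot both vanish, contradicting $\Omega_l=0$ for all large $l$. This contradiction shows that infinitely many $p\in T$ satisfy $\Lambda_{1,E,p}\neq 0$.

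I expect the genuinely delicate step to be the archimedean upper bound for the numerator polynomial $P_l(\xi)$: whereas $Q_l$ is controlled directly by a contour integral, $P_l$ is tied to $\mathcal H(\xi)$, which for $\xi>0$ is not even defined, so one must pass to the analytic continuation $\mathcal H^{\tau}$ and optimise over the contour $\tau$; moreover the exponential base must come out as $4|\xi|$ rather than something larger, since any weaker constant would violate the $\limsup=0$ hypothesis and break the final comparison. Everything else---the divisibility of the coefficients of $R_l$ by $(l!)^2$ and the non-vanishing of the Padé determinant---is structural and should be routine.
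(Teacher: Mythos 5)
Your proposal follows exactly the proof strategy of the cited source \cite{AMLOTA2019} (note that the present paper does not reprove this proposition, it only quotes it; the same machinery reappears here as the proof of Theorem \ref{theoremX}): evaluate the Pad\'e identity $Q_lE-P_l=R_l$ at $t=\xi$ in each $\mathbb{Q}_p$ where the relation is assumed to hold, form the integer $\Omega_l=h_0Q_l(\xi)+h_1P_l(\xi)=-h_1R_{l,p}(\xi)$, play an archimedean upper bound for $P_l,Q_l$ against the $p$-adic smallness $|R_{l,p}(\xi)|_p\le|l!|_p^2\,|\xi|_p^{2l}$ via the product formula, and exclude $\Omega_l=\Omega_{l+1}=0$ by the determinant identity $Q_l(t)P_{l+1}(t)-Q_{l+1}(t)P_l(t)=l!^2t^{2l}$ (Lemma \ref{detlemma}). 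The reduction to $h_1\neq 0$, the divisibility of all coefficients of $R_l$ by $l!^2$, and the determinant step are all correct as you set them up.

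There is, however, one step that fails as written: the bookkeeping around $R$. You take $R$ to be the set of primes of $T$ dividing $\xi$, a set independent of $S$, and assert $\prod_{p\in T\setminus S}|\xi|_p^{2l}=\bigl(\prod_{p\in R}|\xi|_p^2\bigr)^l$. But $S$ is forced on you by the contradiction hypothesis (it is the finite set of primes of $T$ where the form does not vanish), and nothing prevents $S$ from containing primes dividing $\xi$. In that case the left-hand side exceeds the right-hand side by the factor $\prod_{p\in S\cap R}|\xi|_p^{-2l}$, which grows exponentially in $l$: deleting factors smaller than $1$ from a product makes it larger, not equal. Your product-formula bound then becomes $\kappa'(cM)^l\,l!\prod_{p\in T\setminus S}|l!|_p^2$ with some fixed $M>1$, and the hypothesis $\limsup_n c^nn!\prod_{p\in T\setminus S}|n!|_p^2=0$ says nothing about this larger quantity, so the final contradiction does not follow. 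The repair lies in the intended reading of the (admittedly ambiguously quoted) condition: the constant $c=c(\xi;R)$ is formed with $R$ equal to the very set $T\setminus S$ to which the condition is applied, i.e.\ $c=4|\xi|\prod_{p\in T\setminus S}|\xi|_p^2$ varies with $S$; with that reading your bound coincides exactly with the hypothesized quantity and the proof closes. As a side remark, the factor $4$ in $c$ comes from $\sum_{h}\binom{l}{h}^2=\binom{2l}{l}\le 4^l$, so for this proposition the crude coefficient estimates $|Q_l(\xi)|,|P_l(\xi)|\le l!\,(4|\xi|)^l$ already suffice; the Hardy-integral analytic continuation that you single out as the delicate point is what the present paper needs for the sharper bounds of Theorem \ref{theoremX}, not for the quoted proposition.
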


Let $h\in\mathbb{Z}_{\ge 3}$ be a given integer. Then Proposition \ref{AB} holds when the set $T$ is chosen to be the union of the primes 
in $r$ residue classes in the reduced residue system modulo $h$, where $r > \frac{\varphi(h)}{2}$. 
This in turn has the following generalization proved in \cite[Theorem 10.2]{Seppala2019}:
Let $m\in\mathbb{Z}_{\ge 1}$ and $V=\{ v \in V_0 \; : \; v|p \ \text{for some} \ p \in T \}$. 
Suppose $r > \frac{m \varphi (h)}{m+1}$. Then there exists a valuation $v \in V$ such that
$
h_0 + h_1 E_v(\alpha_1) + \ldots + h_m E_v(\alpha_m) \neq 0.
$

In the above mentioned works, only the existence of a prime (in a finite or infinite set) breaking a given global relation is proved.
This phenomenon seems to be common in almost all the works considering the arithmetic of the values of Euler's series.
 
Suppose now that the prime, say $p$, is fixed. Remmal \cite{Remmal980} (see also \cite{Remmal981}) considered lower bounds for the quantities  
$\left| S(E(p^a)) \right|_{p}$, where $S(z)\in\mathbb{Z}[z]\setminus\{0\}$, assuming $a\in\mathbb{Z}_{\ge 1}$ is big enough with respect to $p$ and $H$, the naive height of the polynomial $S(z)$.
In the case $S(z)=cz-d$, Remmal's result is given by:

\begin{proposition}\cite[Th\'eor\`eme, pp.\ 3--5]{Remmal980}
Let $p$ be a prime number and $H\in\mathbb{Z}_{\ge 1}$. 
Suppose that $a$ is a positive integer such that
\begin{equation}\label{paehto5}
p^a > \;  (\log H)^{64},\quad \log H\ge 2^{13}.
\end{equation}
Then for all $c,d \in \Z$, $c \neq 0$ with $|c|+|d|\le H$, there holds
\begin{equation}\label{Remmalbound}
\left|c E_p(p^a)-d\right|_p \ge  \frac{1}{ H^{\frac{a}{8}\log p} }.
\end{equation}
\end{proposition}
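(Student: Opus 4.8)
The plan is to attack the linear form $\Lambda_p=dE_p(p^a)-c$ of \eqref{linform} through the Padé approximations to $E(t)$ furnished by the identity $Q_l(t)E(t)-P_l(t)=R_l(t)$ of \eqref{PADE3}, and not through the partial sums of \eqref{Eulerfactorial}. The naive truncation is hopeless here: the tail $\sum_{k\ge N}k!\,(p^a)^k$ is $p$-adically of size $p^{-aN-\operatorname{ord}_p(N!)}$, while the integer $\sum_{k<N}k!\,(p^a)^k$ has Archimedean size comparable to $N!\,p^{aN}$, and these two effects cancel exactly in the product formula, leaving no usable estimate. The Padé polynomials repair this, since they have small Archimedean size while producing a strongly $p$-divisible remainder, and it is precisely that gap which the argument exploits.

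The first task is to record three quantitative inputs. On the $p$-adic side, the remainder vanishes to order $2l$ at the origin with leading term $(l!)^2t^{2l}$, so the integrality of the Padé coefficients forces $\operatorname{ord}_p R_l(p^a)\ge 2al$; hence $|R_l(p^a)|_p$ is extremely small once $a$ is large. On the Archimedean side, a contour integral yields the routine bound for $|Q_l(p^a)|$. The genuinely delicate input is the Archimedean size of $P_l(p^a)$, and I expect this to be the main obstacle. Rather than estimating $P_l$ directly, I would read it off from the companion identity $Q_l(t)\mathcal{H}(t)-P_l(t)=\mathcal{R}_l(t)$, writing $P_l(p^a)=Q_l(p^a)\mathcal{H}^{\tau}(p^a)-\mathcal{R}_l^{\tau}(p^a)$ with the analytically continued integrals of Section \ref{Chapterestimates}, since $\mathcal{H}$ itself is undefined at the positive point $t=p^a$. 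Estimating the two integrals over the ray $\tau=\{y(1+bi)\}$ and optimising in $b$ then delivers the bound for $|P_l(p^a)|$; this is both the new ingredient and the step that ultimately pins down the constant in the exponent.

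With the estimates in hand I would eliminate the unknown value $E_p(p^a)$. Combining $Q_lE-P_l=R_l$ with $dE-c=\Lambda_p$ gives the integer relation $M_l:=cQ_l(p^a)-dP_l(p^a)=dR_l(p^a)-Q_l(p^a)\Lambda_p$. A normality (determinant) argument shows $Q_l(t)P_{l+1}(t)-Q_{l+1}(t)P_l(t)$ is a nonzero monomial, so the matrix of two consecutive approximants is invertible at $t=p^a\neq0$ and the nonzero vector $(c,-d)$ cannot be annihilated at both levels; hence $M_l\neq0$ for a suitable $l\in\{l_0,l_0+1\}$. For that $l$ the product formula gives $|M_l|_p\ge 1/|M_l|$, and once $2al>\log_p|M_l|$ we have $|dR_l(p^a)|_p<|M_l|_p$, so the ultrametric inequality forces $|Q_l(p^a)\Lambda_p|_p=|M_l|_p$ and therefore
\[
|\Lambda_p|_p\ \ge\ \frac{1}{|Q_l(p^a)|_p\,|M_l|}\ \ge\ \frac{1}{|M_l|}\ \ge\ \frac{1}{H\,\max\{|Q_l(p^a)|,|P_l(p^a)|\}},
\]
using $|Q_l(p^a)|_p\le 1$ and $|c|+|d|\le H$.

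Finally I would choose $l$ as small as the dominance condition $2al>\log_p|M_l|$ allows and substitute the Archimedean bounds of the second paragraph. The hypotheses $p^a>(\log H)^{64}$ and $\log H\ge 2^{13}$ of \eqref{paehto5} are exactly what guarantee that such an $l$ exists while keeping $\max\{|Q_l(p^a)|,|P_l(p^a)|\}$ small enough that the denominator above does not exceed $H^{\frac{a}{8}\log p}$, which is the claimed bound \eqref{Remmalbound}. The only real freedom is the tension between making $l$ large enough for $p$-adic dominance and small enough for Archimedean control, and carefully balancing these two requirements against the contour-optimised estimate for $P_l$ is what produces the explicit exponent.
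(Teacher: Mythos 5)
Your proposal is correct and follows essentially the same route as the paper: the paper's Theorem \ref{theoremX} is proved by exactly this combination of the Pad\'e identity \eqref{PADE3}, the $p$-adic remainder bound (Lemma \ref{jaannoskoko}), the contour-integral bound for $Q_l$ (Lemma \ref{polrajaQ}), the bound for $P_l$ via the analytically continued Hardy identity over $\tau=\{y(1+bi)\}$ optimized in $b$ (Lemma \ref{Ppolraja2}), the determinant Lemma \ref{detlemma} applied at two consecutive indices, and the product formula, after which Remmal's bound is recovered (under even weaker hypotheses, Corollary \ref{Cordiminish}) by taking $l=\lfloor \log H/16\rfloor$, just as in your final step. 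Your slight reorganization of the product-formula step (forcing $|dR_l(p^a)|_p<|M_l|_p$ and invoking the ultrametric inequality, rather than the paper's estimate of $\max\{|\Lambda_p|_p,|r_{\hat l}|_p\}$) is equivalent to what the paper does.
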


As far as the authors know, Remmal's result is sole of its kind.
However, the lower bound of $p^a$ in \eqref{paehto5}  is superfluously large, and our target is to revise it.

\subsection{New results}\label{sec:newresults}

We shall prove the following general theorem, together with some corollaries. 

\begin{theorem}\label{theoremX}
Let $p$ be a prime number and $H,l\in\mathbb{Z}_{\ge 1}$. 
Let $t\in\mathbb{Z} \setminus \{0\}$, $|t|_p<1$ and denote
$$
B(l,t): = \left( \frac{l|t|}{4} \right)^{\frac{1}{4}}+\left( \frac{l|t|}{4} \right)^{-\frac{1}{4}}.
$$
If
\begin{equation}\label{paehto2}
2 (l+1)! B(l+1,t) \cdot |t|^{l+1} e^{2\sqrt{\frac{l+1}{|t|}}} \cdot |t|_p^{2l} < \frac{1}{H},
\end{equation}
then for all $c,d \in \Z$, $c \neq 0$ with $|c|+|d|\le H$, there holds
\begin{equation*}
\left| c E_p\!\left(t\right)-d \right|_p \ge  |t|_p^{2l}.
\end{equation*}
\end{theorem}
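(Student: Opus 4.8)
The plan is to turn the $p$-adic linear form $\Lambda_p=dE_p(t)-c$ from \eqref{linform} into a statement about an ordinary rational integer and then to play the $p$-adic and Archimedean sizes of that integer against each other through the product formula. Starting from the Pad\'e identity $Q_n(t)E_p(t)-P_n(t)=R_n(t)$ evaluated at our integer point $t$ (where $Q_n(t),P_n(t)\in\Z$), I multiply by $d$ and substitute $dE_p(t)=\Lambda_p+c$ to obtain
\begin{equation*}
Q_n(t)\,\Lambda_p = M_n + d\,R_n(t),\qquad M_n:=d\,P_n(t)-c\,Q_n(t)\in\Z .
\end{equation*}
Thus $M_n$ is a rational integer, and if $M_n\neq0$ the product formula gives $|M_n|_p\ge 1/|M_n|$. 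The whole proof consists in showing that, for a suitable $n\in\{l,l+1\}$, the integer $M_n$ is simultaneously $p$-adically and Archimedean-ly small, which is impossible unless $\Lambda_p$ is already large.

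First I would record the two ingredients controlling $M_n$. On the $p$-adic side the remainder is a series in $t$ whose every term carries a factor $t^{2n}$ with integer coefficient, so $|R_n(t)|_p\le|t|_p^{2n}$; in particular $|R_{l+1}(t)|_p\le|t|_p^{2l+2}$, which is \emph{strictly} smaller than $|t|_p^{2l}$ because $|t|_p<1$. On the Archimedean side I need upper bounds for $|Q_{l+1}(t)|$ and $|P_{l+1}(t)|$, whence $|M_{l+1}|\le H\max\{|P_{l+1}(t)|,|Q_{l+1}(t)|\}$ via $|c|+|d|\le H$. The denominator bound follows from the contour-integral representation of $Q_{l+1}(t)$, a saddle-point estimate of Bessel type producing exactly the factors $(l+1)!\,|t|^{l+1}$, $B(l+1,t)$ and $e^{2\sqrt{(l+1)/|t|}}$ appearing in \eqref{paehto2}.

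The main obstacle is the Archimedean estimate for the numerator $P_{l+1}(t)$. For this I would exploit the companion identity $Q_{l+1}(t)\mathcal{H}(t)-P_{l+1}(t)=\mathcal{R}_{l+1}(t)$, so that $P_{l+1}(t)$ is controlled once $\mathcal{H}(t)$ and $\mathcal{R}_{l+1}(t)$ are estimated from their integral representations together with the already-established bound for $Q_{l+1}(t)$. When $t<0$ these integrals converge directly; the genuinely delicate case is $t>0$, where $\mathcal{H}(t)$ is not even defined, and here I would pass to the analytically continued integrals $\mathcal{H}^{\tau}(t)$ and $\mathcal{R}_{l+1}^{\tau}(t)$ along the rotated ray $\tau=\{y(1+bi)\}$, estimate them there, and finally optimize the free parameter $b$. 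This deformation of contour is the technical heart of the argument.

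Finally I would assemble the contradiction. Suppose $|\Lambda_p|_p<|t|_p^{2l}$. Since $Q_n(t),d\in\Z$ have $p$-adic absolute value at most $1$, the displayed identity gives $|M_{l+1}|_p\le\max\{|\Lambda_p|_p,\,|R_{l+1}(t)|_p\}<|t|_p^{2l}$; if $M_{l+1}\neq0$ this together with $|M_{l+1}|_p\ge1/|M_{l+1}|$ forces $|M_{l+1}|\,|t|_p^{2l}>1$, contradicting \eqref{paehto2} once the Archimedean bound for $|M_{l+1}|$ is inserted. To arrange $M_{l+1}\neq0$, I would invoke that two consecutive Pad\'e pairs cannot both satisfy $dP_n(t)=cQ_n(t)$: the Wronskian-type quantity $Q_lR_{l+1}-Q_{l+1}R_l$ is a nonzero monomial at $t\neq0$, so $Q_{l+1}M_l-Q_lM_{l+1}=d(Q_lR_{l+1}-Q_{l+1}R_l)\neq0$ and hence $M_l,M_{l+1}$ cannot both vanish. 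In the remaining case $M_l\neq0$ the same computation at level $l$ yields $|M_l|_p\le|t|_p^{2l}$, and since the level-$l$ Archimedean bound is dominated by the level-$(l+1)$ bound of \eqref{paehto2}, we again reach $1\le|M_l|\,|t|_p^{2l}<1$, a contradiction. Therefore $|\Lambda_p|_p\ge|t|_p^{2l}$.
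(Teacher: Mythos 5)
Your proposal is correct and takes essentially the same route as the paper's own proof: the same Pad\'e identity turned into the integer linear forms $M_n = dP_n(t)-cQ_n(t)$ (the paper's $W_n$), the same determinant lemma to ensure one of $M_l, M_{l+1}$ is nonzero, the same product-formula comparison of $p$-adic and Archimedean sizes, and the same bounds for $Q_n$ (contour integral) and $P_n$ (Hardy integral, with the rotated ray $\tau$ for $t>0$). The only cosmetic difference is that you argue by contradiction with an explicit case split between $\hat l = l+1$ and $\hat l = l$, while the paper runs the identical inequality chain directly for whichever $\hat l \in \{l, l+1\}$ has $W_{\hat l}\neq 0$, silently using (as you do) that the level-$l$ Archimedean bound is dominated by the level-$(l+1)$ bound.
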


\begin{corollary}\label{theorem1}
Let $p$ be a prime number and $H,l\in\mathbb{Z}_{\ge 1}$. 
Suppose that $a$ is a positive integer such that
\begin{equation}\label{paehto}
p^a > \left( H \cdot 2 (l+1)! B(l+1,p^a) e^{2\sqrt{\frac{l+1}{p^a}}} \right)^{\frac{1}{l-1}},\quad 
B(l,\pm p^a)= \left( \frac{lp^a}{4} \right)^{\frac{1}{4}}+\left( \frac{lp^a}{4} \right)^{-\frac{1}{4}}.
\end{equation}
Then for all $c,d \in \Z$, $c \neq 0$ with $|c|+|d|\le H$, there holds
\begin{equation}\label{plowerbound}
\left| c E_p\left(\pm p^a\right)-d \right|_p \ge \frac{1}{p^{2al}}.
\end{equation}
\end{corollary}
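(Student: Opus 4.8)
The plan is to obtain the corollary as the immediate specialisation of Theorem \ref{theoremX} to $t=\pm p^a$, so that essentially all of the work reduces to checking that the abstract hypothesis \eqref{paehto2} is, after collecting the powers of $p$, exactly the concrete hypothesis \eqref{paehto}. First I would verify that $t=\pm p^a$ is an admissible value in the sense of Theorem \ref{theoremX}: it lies in $\mathbb{Z}\setminus\{0\}$, and since $a\in\mathbb{Z}_{\ge 1}$ its $p$-adic absolute value is $|t|_p=p^{-a}<1$, while its ordinary absolute value is $|t|=p^a$. In particular $B(l+1,\pm p^a)$ and the factor $e^{2\sqrt{(l+1)/|t|}}=e^{2\sqrt{(l+1)/p^a}}$ depend only on $|t|=p^a$, so the sign of $t$ is immaterial and both cases $\pm p^a$ are treated at once.

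Next I would read off the conclusion. With $|t|_p=p^{-a}$ we have $|t|_p^{2l}=p^{-2al}=1/p^{2al}$, so the inequality $|dE_p(t)-c|_p\ge|t|_p^{2l}$ supplied by Theorem \ref{theoremX} is verbatim the sought bound \eqref{plowerbound}. Hence the only remaining point is to confirm that \eqref{paehto} forces the theorem's hypothesis \eqref{paehto2} to hold at $t=\pm p^a$.

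For that I would substitute $|t|=p^a$ and $|t|_p=p^{-a}$ into the left-hand side of \eqref{paehto2} and combine the two $p$-power factors via
\[
|t|^{l+1}\,|t|_p^{2l}=p^{a(l+1)}\,p^{-2al}=p^{-a(l-1)}.
\]
Thus \eqref{paehto2} is equivalent to
\[
H\cdot 2(l+1)!\,B(l+1,p^a)\,e^{2\sqrt{\frac{l+1}{p^a}}}<p^{a(l-1)},
\]
and, taking $(l-1)$-th roots, to the hypothesis \eqref{paehto}. Invoking Theorem \ref{theoremX} then yields the claim.

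Finally, I would flag the single genuine caveat: this manipulation, and in particular taking the $(l-1)$-th root while preserving the direction of the inequality, requires $l-1>0$, i.e.\ $l\ge 2$. For $l=1$ the exponent $-a(l-1)$ vanishes and the left-hand side of \eqref{paehto2}, being a product of factors each at least $1$ (the factorial and $B$ strictly exceeding $1$), can never be $<1$, so the hypothesis is vacuous. I expect the only real \emph{obstacle} here to be bookkeeping rather than mathematics: cleanly identifying the exponents and noting the implicit restriction $l\ge 2$; there is no analytic content beyond Theorem \ref{theoremX} itself.
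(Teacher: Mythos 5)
Your proposal is correct and follows essentially the same route as the paper: specialise Theorem \ref{theoremX} to $t=\pm p^a$, collect the powers of $p$ via $|t|^{l+1}|t|_p^{2l}=p^{-a(l-1)}$, and observe that \eqref{paehto} is precisely \eqref{paehto2} after taking $(l-1)$-th roots. Your remark that $l\ge 2$ is implicitly required is a sensible observation (the paper leaves it tacit, the exponent $\tfrac{1}{l-1}$ in \eqref{paehto} already excluding $l=1$), but it does not change the argument.
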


Now we are ready to choose appropriate values for $l$ as a function of $H$.
For example, in order to get Remmal's lower bound \eqref{Remmalbound} we pick $l=\left\lfloor \frac{\log H}{16} \right\rfloor$.

\begin{corollary}\label{Cordiminish}
Let $p$ be a prime number, $c,d \in \Z$, $c\neq 0$, $H\in\mathbb{Z}_{\ge 1}$ and $H(c,d):= |c|+|d|\le H$.
Suppose that $a$ is a positive integer such that
\begin{equation}\label{paehto6}
p^a > (\log H)^4,\quad \log H\ge e^{8}.
\end{equation}
Then
\begin{equation*}\label{}
\left|c E_p\left(\pm p^a\right)-d\right|_p > \frac{1}{ H^{\frac{a}{8}\log p} }.
\end{equation*}
\end{corollary}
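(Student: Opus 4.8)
The plan is to specialise Corollary~\ref{theorem1} to $t=p^a$ with the choice $l=\bigl\lfloor \tfrac{\log H}{16}\bigr\rfloor$ suggested above, so that everything reduces to two routine tasks: verifying hypothesis \eqref{paehto} for this $l$, and comparing the exponent $2al$ produced by \eqref{plowerbound} with the target exponent $\tfrac{a}{8}\log p\cdot\log H$. Note first that $t=p^a\in\Z\setminus\{0\}$ with $|t|_p=p^{-a}<1$, so Corollary~\ref{theorem1} is applicable once \eqref{paehto} holds. The hypothesis $\log H\ge e^{8}$ gives $\log H>2980$, hence $l\ge 186$; in particular $l\ge 2$, so the exponent $\tfrac{1}{l-1}$ in \eqref{paehto} makes sense, and the floor yields the two-sided estimate $16l\le\log H<16(l+1)$ that I will use throughout.

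I would dispose of the exponent comparison first. Since $e^{16k}$ is irrational for every positive integer $k$, the number $\tfrac{\log H}{16}$ is never an integer, so the floor is strict: $l<\tfrac{\log H}{16}$. Multiplying by $2a\log p>0$ gives $2al\log p<\tfrac{a}{8}\log p\cdot\log H$, that is $p^{2al}<H^{\frac{a}{8}\log p}$. Combined with the bound $\left|dE_p(p^a)-c\right|_p\ge p^{-2al}$ from \eqref{plowerbound}, this yields exactly $\left|dE_p(p^a)-c\right|_p>H^{-\frac{a}{8}\log p}$, which is the assertion; it is here that strictness of the floor is needed, since \eqref{plowerbound} itself is only a non-strict bound.

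It remains to verify \eqref{paehto}. Raising it to the power $l-1$ and taking logarithms, it suffices to establish
\begin{equation*}
a(l-1)\log p>\log H+\log 2+\log\bigl((l+1)!\bigr)+\log B(l+1,p^a)+2\sqrt{\tfrac{l+1}{p^a}}.
\end{equation*}
I would bound the right-hand side from above term by term: $\log H<16(l+1)$; $\log\bigl((l+1)!\bigr)<(l+1)\log(l+1)$; since $\tfrac{(l+1)p^a}{4}\ge 1$ one has $B(l+1,p^a)\le 2\bigl(\tfrac{(l+1)p^a}{4}\bigr)^{1/4}$, whence $\log B(l+1,p^a)\le\tfrac14 a\log p+\tfrac14\log(l+1)+\tfrac12\log 2$; and from $p^a>(\log H)^4\ge(16l)^4$ the last term is below $1$. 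On the left-hand side I would use $a\log p=\log(p^a)>4\log\log H\ge 4\log(16l)$, again via $\log H\ge 16l$. After moving the term $\tfrac14 a\log p$ contributed by $B$ to the left, the inequality reduces to
\begin{equation*}
4\bigl(l-\tfrac54\bigr)\bigl(\log 16+\log l\bigr)>16(l+1)+(l+1)\log(l+1)+\tfrac14\log(l+1)+\tfrac32\log 2+1 .
\end{equation*}
Here the left side grows like $4l\log l+4l\log 16$ and the right side like $l\log l+16l$, so the difference behaves like $l\bigl(3\log l+4\log 16-16\bigr)$, which is positive for all $l\ge 186$; a direct check at the endpoint $l=186$ confirms a comfortable margin. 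The main obstacle is exactly this last numerical step: the factor $(p^a)^{1/4}$ inside $B(l+1,p^a)$ lowers the effective leading coefficient on the left from $l-1$ to $l-\tfrac54$, and the linear term $16(l+1)$ coming from $H$ must be absorbed into the $4l\log 16$ contribution, so care with the constants is needed to ensure the inequality holds throughout the range $l\ge 186$ made available by the hypothesis $\log H\ge e^{8}$, and not merely asymptotically.
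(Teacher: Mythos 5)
Your proposal is correct and takes essentially the same route as the paper's own proof: the same choice $l=\left\lfloor \frac{\log H}{16}\right\rfloor$, the same bound $B(l+1,p^a)\le \sqrt{2}\,(l+1)^{1/4}(p^a)^{1/4}$ whose factor $(p^a)^{1/4}$ is absorbed so that the effective exponent drops from $l-1$ to $l-\frac{5}{4}$, and the same use of $a\log p>4\log\log H\ge 4\log(16l)$; the paper merely packages the computation as an upper bound $\log\rho(l)\le\log\log H+22.9302\le 4\log\log H$ rather than your explicit inequality in $l$ verified from $l\ge 186$ onward. Your additional observation that the floor is strict (since $e^{16k}$ is irrational), which upgrades the non-strict bound \eqref{plowerbound} to the strict inequality claimed in the statement, is a detail the paper's proof leaves implicit.
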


Note that our assumptions \eqref{paehto6} give a larger scope for the size of $p^a$ in comparison to the size of $\log H$ 
than assumptions \eqref{paehto5} used by Remmal. 
However, we can offer even better scope by choosing $l=\left\lceil \frac{16}{11}\log(2H) \right\rceil$. In fact, this choice will give 
an approximate minimum of the right-hand side of inequality \eqref{paehto}. 
In this case, the price we need to pay is that the bound for $\left|c E_p\left(\pm p^a\right)-d \right|_p$ becomes weaker.

\begin{corollary}\label{Cormin}
Let $p$ be a prime number and  $H\in\mathbb{Z}_{\ge 4}$. 
Suppose that $a$ is a positive integer such that
\begin{equation}\label{paehto4}
p^a > \frac{16}{11} e^{\frac{11 + 6 \log 4  + \log 5 + 4 \sqrt{10}}{11} } \log \left(2H e^{\frac{11}{16}} \right).
\end{equation}
Then for all $c,d \in \Z$, $c \neq 0$ with $|c|+|d|\le H$, there holds
\begin{equation}\label{Measure4}
\left| c E_p\left(\pm p^a\right)-d \right|_p > \frac{1}{\left( 2H e^{\frac{11}{16}} \right)^{\frac{32}{11} a \log p}}.
\end{equation}
\end{corollary}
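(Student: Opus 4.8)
The plan is to deduce Corollary \ref{Cormin} from Corollary \ref{theorem1} by the near-optimal choice
\begin{equation*}
l := \left\lceil \tfrac{16}{11}\log(2H) \right\rceil,
\end{equation*}
and then to simplify the resulting bound. Throughout I write $L := \frac{16}{11}\log(2H)$, so that $L \le l < L+1$, and I record the identity $\log\!\left(2He^{11/16}\right) = \frac{11}{16}(L+1)$. With this notation, hypothesis \eqref{paehto4} becomes exactly
\begin{equation*}
p^a > e^{C}(L+1), \qquad C := \frac{11 + 6\log 4 + \log 5 + 4\sqrt{10}}{11},
\end{equation*}
because $\frac{16}{11}\log(2He^{11/16}) = L+1$. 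Since $H \ge 4$ gives $\log(2H) \ge \log 8$ and hence $l \ge 4$, all the elementary estimates below are used in a range where $l$ is bounded away from its small exceptional values.

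The easy half is to pass from the conclusion $\left|dE_p(p^a)-c\right|_p \ge p^{-2al}$ of Corollary \ref{theorem1} to \eqref{Measure4}. It suffices to show $p^{2al} < \left(2He^{11/16}\right)^{\frac{32}{11}a\log p}$; taking logarithms and cancelling the positive factor $2a\log p$, this is precisely $l < \frac{16}{11}\log(2He^{11/16}) = L+1$, which holds because $l = \lceil L\rceil < L+1$. The strictness of this last inequality is exactly what upgrades the non-strict bound of Corollary \ref{theorem1} to the strict bound \eqref{Measure4}.

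The substance of the proof is verifying that \eqref{paehto4} forces the hypothesis \eqref{paehto} of Corollary \ref{theorem1} for this $l$. I would take logarithms of \eqref{paehto}, reducing it to
\begin{equation*}
(l-1)\log p^a > \log(2H) + \log\big((l+1)!\big) + \log B(l+1,p^a) + 2\sqrt{\tfrac{l+1}{p^a}},
\end{equation*}
and then control the three awkward terms: Stirling for $\log\big((l+1)!\big)$, the elementary inequality $B(l+1,p^a) \le 2\left(\frac{(l+1)p^a}{4}\right)^{1/4}$ (valid since $\frac{(l+1)p^a}{4} \ge 1$ here), and a bound on the square-root term coming from $p^a > e^{C}(L+1) > e^{C}l$. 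Substituting $\log(2H) = \frac{11}{16}L$ together with the lower bound for $p^a$ collapses the inequality to a single threshold on the ratio $p^a/(l+1)$, and $C$ is calibrated precisely so that this threshold is met.

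The main obstacle is the constant bookkeeping in this last step. Asymptotically the requirement is only $p^a \gtrsim (l+1)$, because after division by $l-1$ the dominant contributions $(l-1)\log p^a$ and $(l+1)\log(l+1)$ nearly cancel; the genuine work is to make this precise with fully explicit lower-order terms. It is the explicit treatment of the square-root term $2\sqrt{(l+1)/p^a}$, the $\tfrac14$-power inside $B$, and the factor $\tfrac{11}{16}$ coming from the choice of $l$ that generates the ungainly constants $4\sqrt{10}$, $\log 5$, and $6\log 4$ in $C$. I would organise the estimate by isolating $p^a/(l+1)$, bounding the remaining $l$-dependence monotonically using $l \ge 4$, and finally optimising the residual free parameters so as to recover the stated value of $C$.
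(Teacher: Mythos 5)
Your overall strategy coincides exactly with the paper's proof: the same choice $l=\left\lceil \frac{16}{11}\log(2H)\right\rceil$ (which is at least $4$ because $H\ge 4$), the same reduction to hypothesis \eqref{paehto} of Corollary \ref{theorem1}, and the same endgame, in which the strict inequality $l=\lceil L\rceil < L+1=\frac{16}{11}\log\left(2He^{\frac{11}{16}}\right)$ (with $L:=\frac{16}{11}\log(2H)$) converts the non-strict bound $\left|dE_p(p^a)-c\right|_p\ge p^{-2al}$ into the strict bound \eqref{Measure4}. All of these structural steps are correct, and your identification of the three awkward terms is the right one.

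The genuine gap is that the decisive step is asserted rather than proved: the sentence ``$C$ is calibrated precisely so that this threshold is met'' is exactly the content of the corollary, so it cannot be invoked as a known fact, and as stated your plan even mislocates how $C$ arises. In the paper there is no ``optimising of residual free parameters'': the constant falls out of monotone evaluation at $l=4$. Concretely, the paper absorbs the factor $(p^a)^{1/4}$ from $B(l+1,p^a)\le \sqrt{2}\,(l+1)^{1/4}(p^a)^{1/4}$ into the left-hand side, which replaces the exponent $\frac{1}{l-1}$ by $\frac{1}{l-5/4}$; it bounds $e^{2\sqrt{(l+1)/p^a}}\le e^{2\sqrt{(l+1)/2}}$ using only $p^a\ge 2$ (not your bound $p^a>e^{C}l$, and it uses $(l+1)!\le l^l$ rather than Stirling); then, using $l-\frac{5}{4}\ge\frac{11}{16}l$ for $l\ge 4$ and the fact that each lower-order term divided by $l-\frac54$ is decreasing, it evaluates those terms at $l=4$, producing $\frac{5\log 4}{11}$, $\frac{\log 4}{11}$ (from the $\sqrt2$), $\frac{\log 5}{11}$, and $\frac{4\sqrt{10}}{11}$, whose sum is $\frac{6\log 4+\log 5+4\sqrt{10}}{11}$. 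Finally $\frac{\log(2H)}{\frac{11}{16}l}\le 1$ (since $l\ge L$) and $\log l\le\log(L+1)$ show that the required threshold is at most $e^{C}(L+1)$, which is precisely hypothesis \eqref{paehto4}. Your tighter ingredients (Stirling, and using $p^a>e^{C}l$ on the square-root term, which is legitimate and not circular since \eqref{paehto4} is a hypothesis) would yield a smaller admissible constant, hence also suffice; but then one must still write out the chain and check that the resulting threshold is at most $e^{C}(L+1)$ --- until that explicit verification is done, what you have is a correct plan, not a proof.
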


Note that by Theorem \ref{CFpadic+real}, all the above results for the $p$-adic values of Euler's series at $t=\pm p^a$ are 
valid for the $p$-adic values of the continued fraction in \eqref{contfrac} at $t=\pm p^a$, too.

\section{Analytic continuation of the Hardy integral}\label{Analyticcontinuation}

For $0\le\alpha<\frac{\pi}{2}$, define a circular path (an arc) 
$\kappa_{R,\alpha}:= \left\{ \left. Re^{i\beta} \; \right| \; 0\le \beta \le \alpha \right\}$ and
line paths (segments) $\tau_{\alpha} := \left\{ \left. re^{i\alpha}\; \right| \; 0\le r\le\infty \right\}$ and 
$\tau_{R,\alpha} := \left\{ \left. re^{i\alpha}\; \right| \; 0\le r\le R \right\}$
with their inverse paths denoted by $\tau^{-1}_{\alpha}$ and $\tau^{-1}_{R,\alpha}$, respectively. 
Now we are ready to construct an integration loop by setting
$\gamma_{R,\alpha}:=\tau_{R,0}\cup \kappa_{R,\alpha}\cup \tau^{-1}_{R,\alpha}$.
Then, by Cauchy's integral theorem,
\begin{equation*}
\oint_{\gamma_{R,\alpha}} \frac{e^{-s}}{1-ts}\d s = \theta_{\alpha}\frac{e^{-1/t}}{t},
\end{equation*}
where $\theta_{\alpha}=-2\pi i$ if $\frac{1}{t}$ is within the integration path and $0$ otherwise
for any $t\in\mathbb{C}\setminus\{0\}$ with $\frac{1}{t} \notin\gamma_{R,\alpha}$. Consequently,
\begin{equation}\label{cauchybits}
\int_{\tau_{R,0}} \frac{e^{-s}}{1-ts}\d s  + \int_{\kappa_{R,\alpha}} \frac{e^{-s}}{1-ts}\d s  +  
\int_{\tau^{-1}_{R,\alpha}} \frac{e^{-s}}{1-ts}\d s  = \theta_{\alpha}\frac{e^{-1/t}}{t}.
\end{equation}
Write $t=Te^{i\mu}$. Assuming $TR>1$, we estimate 
\begin{equation*}
\left| \int_{\kappa_{R,\alpha}} \frac{e^{-s}}{1-ts}\d s \right| 
= \left| \mathlarger\int_0^\alpha \frac{e^{-R\cos\beta}e^{-iR\sin\beta}}{1-TRe^{i(\beta+\mu)}} iR e^{i\beta}\d \beta \right|
\le  R\alpha \cdot \frac{e^{-R\cos\alpha}}{TR-1}.					
\end{equation*}

If $-\frac{\pi}{2}<\alpha\le 0$, we may proceed in a similar manner. Therefore, as $\cos\alpha>0$ and $R\rightarrow\infty$, 
we see from \eqref{cauchybits} that
\begin{equation*}
\int_{\tau_{\alpha}} \frac{e^{-s}}{1-ts}\d s  +\theta_{\alpha}\frac{e^{-1/t}}{t} = \int_{\tau_{0}} \frac{e^{-s}}{1-ts}\d s =  \int_{0}^{\infty} \frac{e^{-s}}{1-ts}\d s
\end{equation*}
for $t\in\overline{\mathbb{C}}\setminus\left(\tau_{0}\cup\tau_{-\alpha}\right)$.
Now we denote  
\begin{equation*}
\mathcal{H}_{\alpha}(t) := \int_{\tau_{-\alpha}} \frac{e^{-s}}{1-ts}\d s, 
\end{equation*}
which is defined and analytic for all $t\in\overline{\mathbb{C}}\setminus\tau_{\alpha}$. 
In particular,
\begin{equation*}\label{Hintegral}
\mathcal{H}(t)=\mathcal{H}_{0}(t) = \int_{0}^{\infty} \frac{e^{-s}}{1-ts}\d s
\end{equation*}
is analytic for all $t\in\overline{\mathbb{C}}\setminus[0,\infty]$.

Taking, for example, $\mathcal{H}_{\pi/4}(t)$, we get an analytic continuation of $\mathcal{H}(t)$ to the positive real axis.
In this manner we create a global function , say $\widehat{\mathcal{H}}(t)$, consisting of the branches $\mathcal{H}_{\alpha}(t)$
defined on the branched sheets $e^{2\pi i k}\overline{\mathbb{C}}\setminus\{\tau_{\alpha}\}$, $k\in \mathbb{Z}$. 
In a nutshell, $\widehat{\mathcal{H}}(t)$ is an analytic multivalued function on the Riemann surface 
$\bigcup_{k\in \mathbb{Z}} \left( e^{2\pi i k}\overline{\mathbb{C}}\setminus\{0\} \right)$ of the logarithm 
and which on $\overline{\mathbb{C}}\setminus\{\tau_{\alpha}\}$ coincides with $\mathcal{H}_{\alpha}(t)$.

There exists an explicit version of $\widehat{\mathcal{H}}(t)$: already G.\ H.\ Hardy \cite[formula 2.4.6]{Hardy1973} 
computed the integral $\mathcal{H}(t)$.

\begin{proposition}\cite{Hardy1973}
\begin{equation}\label{Hardyexplicit}
\mathcal{H}(t) = \frac{e^{-\frac{1}{t}}}{t} \left( \gamma + \log\left(-\frac{1}{t}\right) + 
\sum_{n=1}^{\infty}  \frac{\left(\frac{1}{t}\right)^n}{n\cdot n!} \right)
\end{equation}
for $t\in\overline{\mathbb{C}}\setminus[0,\infty[$. Moreover, the right-hand side of \eqref{Hardyexplicit} gives
the analytic continuation of $\mathcal{H}(t)$ to the positive real axis $(0,\infty)$.
\end{proposition}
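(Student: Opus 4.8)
The plan is to reduce $\mathcal{H}(t)$ to the classical exponential integral and then transport the known logarithmic series of the latter. First I work on the negative real axis $t<0$, where $1-ts=1+|t|s>0$ and the integral converges unconditionally. Substituting $v=s-\tfrac1t$ (so that $s=v+\tfrac1t$ and $1-ts=-tv$), the integral becomes
\begin{equation*}
\mathcal{H}(t) = -\frac{e^{-1/t}}{t}\int_{-1/t}^{\infty}\frac{e^{-v}}{v}\,\d v = -\frac{e^{-1/t}}{t}\,E_1\!\left(-\tfrac1t\right),
\end{equation*}
where $E_1(z):=\int_z^{\infty}\frac{e^{-v}}{v}\,\d v$ and the new lower limit $z=-1/t$ is positive. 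Thus the whole statement is reduced to the expansion of $E_1$ near $z=0^+$.

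Next I insert the classical series
\begin{equation*}
E_1(z) = -\gamma - \log z - \sum_{n=1}^{\infty}\frac{(-z)^n}{n\cdot n!},\qquad z>0.
\end{equation*}
The derivative of the right-hand side is $-e^{-z}/z=E_1'(z)$, so the two sides differ only by a constant; I identify that constant with $-\gamma$ from $\lim_{z\to0^+}\bigl(E_1(z)+\log z\bigr)=-\gamma$, the standard integral representation of Euler's constant obtained by splitting $\int_z^\infty=\int_z^1+\int_1^\infty$ and separating the integrable factor $(e^{-v}-1)/v$ from the singular $1/v$. Substituting $z=-1/t$ and simplifying the alternating signs turns the displayed expression for $\mathcal{H}(t)$ into exactly $\frac{e^{-1/t}}{t}\bigl(\gamma+\log(-1/t)+\sum_{n\ge1}(1/t)^n/(n\cdot n!)\bigr)$, which is \eqref{Hardyexplicit}. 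This establishes the identity for every $t<0$.

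It remains to pass from $(-\infty,0)$ to the full domain $\overline{\mathbb{C}}\setminus[0,\infty[$ and to read off the continuation. The power series $\sum_{n\ge1}x^n/(n\cdot n!)$ is entire in $x=1/t$, and with the branch of $\log(-1/t)$ that is analytic off the positive $t$-axis the right-hand side of \eqref{Hardyexplicit} is analytic on the connected domain $\overline{\mathbb{C}}\setminus[0,\infty[$; by the construction of the previous section so is $\mathcal{H}(t)=\mathcal{H}_0(t)$. Since the two analytic functions coincide on the segment $(-\infty,0)$, which has limit points in the domain, the identity theorem forces them to coincide throughout $\overline{\mathbb{C}}\setminus[0,\infty[$. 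Finally, the same right-hand side stays analytic as $t$ crosses $(0,\infty)$ once a branch of the logarithm is fixed there, and this is precisely the asserted analytic continuation of $\mathcal{H}$ past the cut.

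The one genuinely content-bearing step is pinning the integration constant to $-\gamma$; the branch bookkeeping and the identity-theorem argument are routine once the domain is recognized as connected. As an alternative that sidesteps the $E_1$-series entirely, one can integrate by parts twice to show that $\mathcal{H}$ solves the inhomogeneous ODE $t^2\mathcal{H}'(t)+(t-1)\mathcal{H}(t)+1=0$, verify that the right-hand side of \eqref{Hardyexplicit} solves the same equation, and separate the homogeneous solutions $C\,e^{-1/t}/t$ by comparing limits as $t\to0^-$, where $\mathcal{H}(t)\to1$ forces $C=0$.
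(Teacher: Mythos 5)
The paper itself gives no proof of this proposition: it is imported verbatim from Hardy's \emph{Divergent Series} (formula 2.4.6 there), so your write-up supplies an argument the authors only cite. Your main proof is correct. The substitution $v=s-\tfrac1t$ for $t<0$ does give $1-ts=-tv$ and hence $\mathcal{H}(t)=-\tfrac{e^{-1/t}}{t}E_1\bigl(-\tfrac1t\bigr)$ with $E_1(z)=\int_z^{\infty}e^{-v}v^{-1}\,\d v$; the classical expansion $E_1(z)=-\gamma-\log z-\sum_{n\ge1}\tfrac{(-z)^n}{n\cdot n!}$ (derivative matching plus the standard representation $\lim_{z\to0^+}(E_1(z)+\log z)=-\gamma$ of Euler's constant) then yields \eqref{Hardyexplicit} on $(-\infty,0)$ after the sign bookkeeping $(-z)^n=(1/t)^n$; and the identity theorem on the connected domain $\mathbb{C}\setminus[0,\infty)$, together with a branch choice for $\log(-1/t)$ across the cut, gives the full statement including the continuation to $(0,\infty)$. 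As a consistency check, at $t=-1$ your formula reads $\mathcal{H}(-1)=e\,E_1(1)=0.5963\ldots$, the Euler--Gompertz constant, matching the paper's Section \ref{Analyticcontinuation}. The only real content is pinning the constant to $-\gamma$, which you correctly isolate; everything else is routine, and this is essentially Hardy's own derivation.

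The ODE ``alternative'' you sketch at the end, however, has a genuine gap at the constant-pinning step. The equation $t^2\mathcal{H}'(t)+(t-1)\mathcal{H}(t)+1=0$ is correct, and the right-hand side of \eqref{Hardyexplicit} does satisfy it, so the difference of the two sides is $C\,e^{-1/t}/t$ on $(-\infty,0)$. But this homogeneous solution \emph{blows up} as $t\to0^-$ (since $-1/t\to+\infty$), so knowing $\mathcal{H}(t)\to1$ does not by itself force $C=0$: you must also show that the right-hand side of \eqref{Hardyexplicit} stays bounded as $t\to0^-$. Writing $u=-1/t\to+\infty$, that boundedness is the assertion $\gamma+\log u+\sum_{n\ge1}\tfrac{(-u)^n}{n\cdot n!}=O\bigl(e^{-u}/u\bigr)$, an exponentially small bound on a combination of logarithmically large, wildly cancelling terms --- and it is exactly equivalent to the identity $\gamma+\log u+\sum_{n\ge1}\tfrac{(-u)^n}{n\cdot n!}=-E_1(u)$ combined with $E_1(u)\le e^{-u}/u$, i.e.\ to the $E_1$ expansion the alternative was meant to avoid. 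Comparing instead at $t\to-\infty$ does not help (the homogeneous solution vanishes there), and comparing at a finite point such as $t=-1$ presupposes the Euler--Gompertz evaluation, which is again the identity at a point. So keep the ODE remark only as motivation; your first argument is the proof.
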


Because analytic continuations are unique, we get:

\begin{corollary}
\begin{equation*}\label{GlobalHardyexplicit}
\widehat{\mathcal{H}}(t) = \frac{e^{-\frac{1}{t}}}{t} \left( \gamma + \log\left(-\frac{1}{t}\right) + 
\sum_{n=1}^{\infty}  \frac{\left(\frac{1}{t}\right)^n}{n\cdot n!} \right)
\end{equation*}
for $t\in \bigcup_{k\in \mathbb{Z}} \left( e^{2\pi i k}\overline{\mathbb{C}}\setminus\{0\} \right)$.
\end{corollary}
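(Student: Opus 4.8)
The plan is to invoke the uniqueness of analytic continuation on the Riemann surface of the logarithm. Write
\begin{equation*}
G(t) := \frac{e^{-\frac{1}{t}}}{t} \left( \gamma + \log\left(-\frac{1}{t}\right) + \sum_{n=1}^{\infty} \frac{\left(\frac{1}{t}\right)^n}{n\cdot n!} \right)
\end{equation*}
for the right-hand side of \eqref{Hardyexplicit}. I would first check that $G$ is analytic on the whole surface $\bigcup_{k\in\mathbb{Z}}\left(e^{2\pi i k}\overline{\mathbb{C}}\setminus\{0\}\right)$, then observe that it agrees with $\widehat{\mathcal{H}}$ on an open subset of the principal sheet, and finally conclude by the identity theorem on this connected surface.

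For the analyticity of $G$: the prefactor $e^{-1/t}/t$ is single-valued and holomorphic for every $t\neq 0$, and the power series $\sum_{n\ge 1}\frac{(1/t)^n}{n\cdot n!}$ is entire in the variable $1/t$ (its radius of convergence is infinite), hence holomorphic and single-valued for all $t\neq 0$. The only multivalued ingredient is $\log\left(-\frac{1}{t}\right)$, and this is precisely the branch of the logarithm that lives naturally on the Riemann surface $\bigcup_{k}\left(e^{2\pi i k}\overline{\mathbb{C}}\setminus\{0\}\right)$, gaining an additive $2\pi i$ each time $t$ winds once around the origin. Thus $G$ is a well-defined analytic (multivalued) function on this surface, and its branch structure is governed solely by $\log\left(-\frac{1}{t}\right)$, matching the sheet structure used to assemble $\widehat{\mathcal{H}}$ out of the branches $\mathcal{H}_{\alpha}$.

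Next, by the Proposition stated just above (equation \eqref{Hardyexplicit}) we have $G(t)=\mathcal{H}(t)=\mathcal{H}_{0}(t)=\widehat{\mathcal{H}}(t)$ for all $t\in\overline{\mathbb{C}}\setminus[0,\infty[$, which is a non-empty open subset of the principal sheet. Since the Riemann surface of the logarithm is connected, $\widehat{\mathcal{H}}$ is analytic on it by its construction in this section, and $G$ is analytic on it by the previous step, the two analytic functions agreeing on an open set must coincide on the entire connected surface by the identity theorem. This yields the claimed formula for $\widehat{\mathcal{H}}(t)$ on $\bigcup_{k\in\mathbb{Z}}\left(e^{2\pi i k}\overline{\mathbb{C}}\setminus\{0\}\right)$.

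I expect the only genuinely substantive point to be confirming that $G$ is analytic on \emph{all} sheets of the surface, not merely on the principal one, so that uniqueness can be applied across the full surface rather than just on $\overline{\mathbb{C}}\setminus[0,\infty[$; this reduces to the observations that the series is entire in $1/t$ and that $\log\left(-\frac{1}{t}\right)$ carries exactly the multivaluedness of the logarithm. Once analyticity on the connected surface and agreement on a single open region are in hand, the matching of the $2\pi i k$ shifts between $G$ and $\widehat{\mathcal{H}}$ is automatic, and the remaining argument is routine.
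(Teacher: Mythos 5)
Your proposal is correct and follows exactly the paper's reasoning: the paper derives this corollary in one line (``Because analytic continuations are unique, we get:''), i.e.\ by combining Hardy's explicit formula \eqref{Hardyexplicit} with the uniqueness of analytic continuation on the Riemann surface of the logarithm, which is precisely your identity-theorem argument. Your additional verification that the right-hand side is analytic on all sheets (with $\log\left(-\frac{1}{t}\right)$ carrying the full multivaluedness) merely makes explicit what the paper leaves implicit.
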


For example, 
\begin{equation*}\label{}
\widehat{\mathcal{H}}(-1) =  - e \left( \gamma + 2\pi i k + \sum_{n=1}^{\infty}  \frac{(-1)^n}{n\cdot n!} \right),\quad k\in \mathbb{Z}
\end{equation*}
\begin{equation*}\label{}
\widehat{\mathcal{H}}(1) =  \frac{1}{e} \left( \gamma +  i\pi +  2\pi i k + \sum_{n=1}^{\infty}  \frac{1}{n\cdot n!} \right),\quad k\in \mathbb{Z},
\end{equation*}
and in particular,
\begin{equation*}\label{}
\mathcal{H}(-1) =  - e \left( \gamma  + \sum_{n=1}^{\infty}  \frac{(-1)^n}{n\cdot n!} \right) = 0.5963473623\ldots,
\end{equation*}
where the right-hand side is the Euler-Gompertz constant $\delta$.

\section{Pad\'e approximations}\label{padechapter}

From Matala-aho and Zudilin's work \cite{TAWA2018}, we get the following Pad\'e approximation formulae:

\begin{lemma}\label{EULERPADE}
We have
\begin{equation}\label{PADE3}
Q_{l}(t)E(t) - P_{l}(t)=R_{l}(t),
\end{equation}
where
\begin{equation*}\label{seriesexp1}
Q_l(t)=\sum_{h=0}^{l}h!\binom{l}{h}^2(-t)^h,\quad P_l(t)=[Q_l(t)E(t)]_{l-1},
\end{equation*}
\begin{equation}\label{Remainderseries}
R_l(t)=l!^2 t^{2l}\sum_{k=0}^{\infty} k!\binom{l+k}{k}^2t^k,
\end{equation}
by using the shorthand notation 
$\left[\sum_{k=0}^{\infty}f_kt^k\right]_m=\sum_{k=0}^{m}f_kt^k.$
\end{lemma}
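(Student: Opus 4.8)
The identity is an equality of formal power series in $\mathbb{Q}[[t]]$, so I would prove it by comparing coefficients. Since $P_l(t) = [Q_l(t)E(t)]_{l-1}$ is by definition the truncation of $Q_l(t)E(t)$ to degrees $\le l-1$, the coefficients of $t^0,\ldots,t^{l-1}$ in $Q_l(t)E(t)-P_l(t)$ vanish automatically. Writing $c_n := [t^n]\bigl(Q_l(t)E(t)\bigr)$, it therefore remains to establish two claims: $c_n = 0$ for $l \le n \le 2l-1$, and $c_{2l+k} = l!^2 k!\binom{l+k}{k}^2$ for every $k \ge 0$. The first is the genuine Pad\'e property (vanishing of the remainder to order $2l$), while the second identifies the remainder series $R_l(t)$.

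Carrying out the Cauchy product, for every $n \ge l$ I obtain the finite sum
\[
c_n = \sum_{h=0}^{l}(-1)^h h!\binom{l}{h}^2 (n-h)!.
\]
The key move is to recognise this as a terminating Gauss hypergeometric series evaluated at $1$. Using $\binom{l}{h} = (-1)^h(-l)_h/h!$ and $(n-h)!/n! = (-1)^h/(-n)_h$ (valid since $n \ge l \ge h$), the sum collapses to
\[
c_n = n!\sum_{h=0}^{l}\frac{(-l)_h(-l)_h}{(-n)_h\,h!} = n!\,{}_2F_1(-l,-l;-n;1).
\]

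Now I would apply the Chu--Vandermonde summation ${}_2F_1(-l,b;c;1) = (c-b)_l/(c)_l$, which is legitimate because the upper sum terminates at $h=l$ and $(-n)_h \ne 0$ for $0 \le h \le l$ whenever $n \ge l$. This gives
\[
c_n = n!\,\frac{(l-n)_l}{(-n)_l}.
\]
The factor $(l-n)_l = \prod_{j=0}^{l-1}(l-n+j)$ contains the value $0$ exactly when $0 \le n-l \le l-1$, i.e. when $l \le n \le 2l-1$, which yields the vanishing claim. For $n = 2l+k$ I would evaluate the two Pochhammer symbols, $(l-n)_l = (-1)^l(l+k)!/k!$ and $(-n)_l = (-1)^l(2l+k)!/(l+k)!$, obtaining $c_{2l+k} = ((l+k)!)^2/k!$, which is precisely $l!^2 k!\binom{l+k}{k}^2$.

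The only substantive step is the passage to ${}_2F_1$ and the Vandermonde evaluation; the rest is bookkeeping with factorials, and this is where I expect the (mild) main difficulty to lie, since one must set up the Pochhammer rewriting correctly and verify the terminating-series hypotheses. An alternative route avoids hypergeometrics altogether: one checks directly that $l!\,t^{2l}\int_0^\infty e^{-s}s^l(1-ts)^{-l-1}\d s$ has Taylor coefficients $l!^2 k!\binom{l+k}{k}^2$ (by expanding $(1-ts)^{-l-1}$ and using $\int_0^\infty e^{-s}s^m\,\d s = m!$), and then confirms by repeated integration by parts that this integral equals $Q_l(t)\mathcal{H}(t)$ minus a polynomial of degree $<l$, which must then be $P_l(t)$. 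However, because $E(t)$ has radius of convergence $0$ over $\mathbb{R}$, the purely formal coefficient comparison above is cleaner and sidesteps all convergence bookkeeping.
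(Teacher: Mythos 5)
Your proof is correct, and it takes a genuinely different route from the paper, which in fact gives no proof of Lemma \ref{EULERPADE} at all: the identity is imported wholesale from Matala-aho and Zudilin \cite{TAWA2018}. Your computation checks out. For $n\ge l$ the Cauchy product gives $c_n=\sum_{h=0}^{l}(-1)^h h!\binom{l}{h}^2(n-h)!$; the rewriting $h!\binom{l}{h}^2=(-l)_h^2/h!$ and $(-1)^h(n-h)!=n!/(-n)_h$ turns this into $n!\,{}_2F_1(-l,-l;-n;1)$; and Chu--Vandermonde applies legitimately because the series terminates at $h=l$ and $(-n)_h\neq 0$ for $0\le h\le l\le n$ (the identity extends to the negative integer value $c=-n$ by rational continuation in $c$, both sides being finite there). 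The resulting $c_n=n!\,(l-n)_l/(-n)_l$ vanishes exactly for $l\le n\le 2l-1$, and your Pochhammer evaluations give $c_{2l+k}=((l+k)!)^2/k!=l!^2\,k!\binom{l+k}{k}^2$, which matches \eqref{Remainderseries}; a spot-check at $l=1$ ($c_n=(n-1)\,(n-1)!$, i.e.\ $R_1(t)=t^2+4t^3+18t^4+\cdots$) confirms the bookkeeping. As for what each approach buys: your argument is purely formal in $\mathbb{Q}[[t]]$, needs no analysis, and makes the paper self-contained at this point, whereas the paper's citation ultimately rests on the construction in \cite{TAWA2018}. Your sketched alternative via $l!\,t^{2l}\int_0^\infty e^{-s}s^l(1-ts)^{-l-1}\,\mathrm{d}s$ is essentially what the paper itself does in Lemma \ref{HARDYAPPROX} for the Hardy integral (repeated partial integration plus uniqueness of Pad\'e approximants); but, as you correctly note, for the formal identity \eqref{PADE3} that route requires care precisely because $E(t)$ has zero radius of convergence, so the coefficient comparison is the cleaner proof of this particular lemma.
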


We shall consider the Hardy integral
\begin{equation*}
\mathcal{H}(z)=\int_{0}^{\infty} \frac{e^{-s}}{1-zs}\d s.
\end{equation*}

By denoting $D=\frac{\d}{\d t}$, we define 
\begin{equation*}
L_n(t)=e^t D^n\left(e^{-t}t^n\right),
\end{equation*}
\begin{equation*}
\mathcal{Q}_n(z)=(-z)^n L_n\!\left(\frac{1}{z}\right),
\end{equation*}
\begin{equation*}
\mathcal{P}_n(z)= (-1)^nz^{n-1} \mathlarger\int_{0}^{\infty} \frac{\left(L_n(s)-L_n\left(\frac{1}{z}\right)\right)e^{-s}}{s-1/z}\d s,
\end{equation*}
and
\begin{equation*}
\mathcal{R}_n(z)= n!z^{2n}  \int_{0}^{\infty}   \frac{s^ne^{-s}}{(1-zs)^{n+1}}\d s 
\end{equation*}
for $ n\in\mathbb{Z}_{\ge 0}$.

\begin{lemma}\label{HARDYAPPROX}
Then we have
\begin{align*}
\mathcal{Q}_n(z)  &= Q_n(z)=(-z)^n L_n\!\left(\frac{1}{z}\right),\\
\mathcal{Q}_n(z) & \mathcal{H}(z) -\mathcal{P}_n(z)=\mathcal{R}_n(z),\\
 \mathcal{P}_n(z) &= P_n(z)
\end{align*}
for all $n\in\mathbb{Z}_{\ge 0}$.
\end{lemma}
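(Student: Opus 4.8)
The plan is to establish the three assertions in the order listed, since the first supplies the denominator polynomial, the second is the genuine Pad\'e identity for $\mathcal{H}$, and the third identifies the numerators by a degree-and-order argument once the first two are in hand.

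First I would prove the purely algebraic identity $\mathcal{Q}_n(z) = Q_n(z)$. Expanding the Rodrigues-type expression $L_n(t) = e^t D^n(e^{-t}t^n)$ by the Leibniz rule gives the (unnormalised) Laguerre polynomial $L_n(t) = \sum_{k=0}^n \frac{n!}{k!}\binom{n}{k}(-t)^k$. Substituting $t = 1/z$, multiplying by $(-z)^n$, and reindexing by $h = n-k$ turns each coefficient $\frac{n!}{(n-h)!}\binom{n}{h}$ into $h!\binom{n}{h}^2$, which is exactly the coefficient in $Q_n(z) = \sum_{h=0}^n h!\binom{n}{h}^2(-z)^h$. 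This step is routine.

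The heart of the proof is the Pad\'e identity $\mathcal{Q}_n(z)\mathcal{H}(z) - \mathcal{P}_n(z) = \mathcal{R}_n(z)$. Using $1 - zs = -z(s - 1/z)$ I would rewrite both $\mathcal{Q}_n(z)\mathcal{H}(z)$ and $\mathcal{P}_n(z)$ over the common kernel $1/(s - 1/z)$; the two occurrences of $L_n(1/z)$ then cancel, leaving $(-1)^{n+1}z^{n-1}\int_0^\infty \frac{L_n(s)e^{-s}}{s-1/z}\d s$. Now I would use the Rodrigues form $L_n(s)e^{-s} = D^n(e^{-s}s^n)$ and integrate by parts $n$ times, transferring all derivatives onto $1/(s-1/z)$, whose $n$-th derivative is $(-1)^n n!/(s-1/z)^{n+1}$. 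Converting $(s-1/z)^{-(n+1)}$ back to $(1-zs)^{-(n+1)}$ via $s - 1/z = (zs-1)/z$ produces precisely $\mathcal{R}_n(z) = n!z^{2n}\int_0^\infty \frac{s^n e^{-s}}{(1-zs)^{n+1}}\d s$. The technical point here is twofold: the boundary terms in the integrations by parts must vanish, which holds because $D^k(e^{-s}s^n)$ has a zero of order $n-k$ at $s=0$ (hence vanishes for $k<n$) and decays at $s=\infty$; and the pole at $s=1/z$ must be dealt with, which I would handle by first proving the identity for $z \in \overline{\mathbb{C}}\setminus[0,\infty]$, where $\mathcal{H}(z)$ is genuinely defined and $1/z$ avoids the contour, and then invoking the analytic continuation of Section \ref{Analyticcontinuation} to extend it. I expect this analytic bookkeeping around the singularity to be the main obstacle.

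Finally, for $\mathcal{P}_n(z) = P_n(z)$ I would first note that $\mathcal{P}_n$ is a polynomial of degree at most $n-1$: the divided difference $\frac{L_n(s)-L_n(1/z)}{s-1/z}$ is a polynomial in $s$ whose coefficients are polynomials in $1/z$ of degree $\le n-1$, and integrating term by term against $e^{-s}$ using $\int_0^\infty s^i e^{-s}\d s = i!$ leaves a polynomial in $1/z$ of degree $\le n-1$, which the factor $z^{n-1}$ turns into a polynomial in $z$ of degree $\le n-1$. Next, expanding $\frac{1}{1-zs} = \sum_{k=0}^{2n-1}(zs)^k + O(z^{2n})$ inside $\mathcal{H}$ shows $\mathcal{H}(z) = \sum_{k=0}^{2n-1}k!z^k + O(z^{2n})$, while $\mathcal{R}_n(z) = O(z^{2n})$ by inspection. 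Hence, by the identity just proved, $\mathcal{P}_n(z) = Q_n(z)\mathcal{H}(z) - \mathcal{R}_n(z) = [Q_n(z)E(z)]_{2n-1} + O(z^{2n})$. A polynomial of degree $\le n-1$ that agrees to order $2n-1$ with this expression must equal $[Q_n(z)E(z)]_{2n-1}$; but by Lemma \ref{EULERPADE} and \eqref{Remainderseries} one has $[Q_n E]_{2n-1} = [Q_n E]_{n-1} = P_n(z)$, since the coefficients of $z^n,\dots,z^{2n-1}$ in $Q_n E$ vanish. This yields $\mathcal{P}_n(z) = P_n(z)$, completing the proof.
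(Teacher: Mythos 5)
Your proposal is correct and follows essentially the same route as the paper: Leibniz's rule for $\mathcal{Q}_n(z)=Q_n(z)$, cancellation of the $L_n\!\left(\frac{1}{z}\right)$ terms followed by $n$-fold integration by parts (with boundary terms killed by the vanishing of $D^k\!\left(e^{-s}s^n\right)$ at $s=0$ and the decay at infinity) for the Pad\'e identity, and a degree-versus-order-of-vanishing argument for $\mathcal{P}_n(z)=P_n(z)$. The only real difference is that where the paper simply cites the uniqueness of Pad\'e approximations in the last step, you spell that uniqueness out explicitly via the asymptotic expansion of $\mathcal{H}(z)$ at the origin and the truncation identity $[Q_nE]_{2n-1}=[Q_nE]_{n-1}=P_n$ --- a welcome bit of added rigor, but the same underlying idea.
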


\begin{proof}
By Leibniz's rule we get
\begin{align*}
L_n(t)&=e^t \sum_{k=0}^{n}\binom{n}{k}D^ke^{-t}D^{n-k}t^n\nonumber\\
      &=\sum_{k=0}^{n}\binom{n}{k}(-1)^k n(n-1)\cdots (k+1) t^k\nonumber \\
      &=\sum_{k=0}^{n}(n-k)!\binom{n}{k}^2(-t)^{k}.
\end{align*}
This verifies $\mathcal{Q}_n(z) =(-z)^n L_n(1/z)= Q_n(z)$. Then we turn to the second claim by evaluating
\begin{equation}\label{secondclaim}
\begin{split}
\mathcal{Q}_n(z) & \mathcal{H}(z) -\mathcal{P}_n(z) \\
     &= (-z)^n L_n(1/z) \int_{0}^{\infty} \frac{e^{-s}}{1-zs}\d s + (-1)^n z^{n} \mathlarger\int_{0}^{\infty} \frac{\left(L_n(s)-L_n\left(\frac{1}{z}\right)\right)e^{-s}}{1-zs}\d s\\
     &=(-z)^n \int_{0}^{\infty} \frac{L_n(s)e^{-s}}{1-zs}\d s \\
     &=(-z)^n \int_{0}^{\infty} \frac{D^n(e^{-s}s^n)}{1-zs}\d s,\\ 
		 &=(-z)^n \left( \Big|_{0}^{\infty} \frac{D^{n-1}(e^{-s}s^n)}{1-zs}  - 
		   z \int_{0}^{\infty} \frac{D^{n-1}(e^{-s}s^n)}{(1-zs)^2}\d s \right),\\
		 &= \ldots\\ 
\end{split}
\end{equation}
where we apply partial integration repeatedly, and use the observation  that
\begin{equation*}\label{}
D^k\left(e^{-s} s^n \right) = e^{-s}V_k(s),\quad V_k(s)\in \mathbb{Z}[s],\quad 
\underset{s=0}{\ord}\,  V_k(s)\ge 1,
\end{equation*}
for all $k=0,1,\ldots,n-1$, if $n\ge 1$.

Therefore,
\begin{equation}\label{Hardyapproximation}
\mathcal{Q}_n(z) \mathcal{H}(z) - \mathcal{P}_n(z) = n! z^{2n} \int_{0}^{\infty} \frac{e^{-s}s^n}{(1-zs)^{n+1}}\d s = \mathcal{R}_n(z). 
\end{equation}
Further, we see that $\mathcal{P}_n(z) \in \mathbb{Q}[z]$ and $\deg \mathcal{P}_n(z) \le n-1$. Next the integral 
\begin{equation*}\label{}
\int_{0}^{\infty} \frac{e^{-s}s^n}{(1-zs)^{n+1}}\d s  
\end{equation*}
may be expanded to a formal series (see \eqref{Remainderseries}), showing that $\underset{z=0}{\ord}\, \mathcal{R}_n(z) = 2n$.
Finally, the uniqueness of Pad\'e approximations shows that $\mathcal{P}_n(z) = P_n(z)$.
\end{proof}

\section{Estimates}\label{Chapterestimates}

Let us estimate the term $R_n(z)$  in the $p$-adic case.

\begin{lemma}\label{jaannoskoko}
Let $t\in\mathbb{C}_p$, $|t|_p\le 1$. Then
\begin{equation*}\label{}
|R_n(t)|_p \le |l!|_p^2\, |t|_p^{2l}.
\end{equation*}
In particular, 
\begin{equation*}\label{}
|R_n(\pm p^a)|_p = |l!|_p^2\, p^{-2la}
\end{equation*}
for $a\in\mathbb{Z}_{\ge 1}$.
\end{lemma}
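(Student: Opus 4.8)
The plan is to read the bound directly off the explicit series expansion of the remainder. By Lemma \ref{EULERPADE}, equation \eqref{Remainderseries}, we have
\[
R_l(t)=l!^2\, t^{2l}\sum_{k=0}^{\infty} k!\binom{l+k}{k}^2 t^k,
\]
so the whole matter reduces to controlling the $p$-adic size of the series on the right. The crucial elementary observation I would isolate first is that each coefficient $k!\binom{l+k}{k}^2$ is a positive \emph{integer}: since $\binom{l+k}{k}\in\Z$, its square times $k!$ again lies in $\Z$. Hence $\bigl|k!\binom{l+k}{k}^2\bigr|_p\le 1$ for every $k\ge 0$. Before estimating, I would also note convergence in $\C_p$: because $|k!|_p\to 0$ while $\bigl|\binom{l+k}{k}^2 t^k\bigr|_p\le 1$ (using $|t|_p\le 1$), the general term tends to $0$.

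Granting this, the first inequality is just the strong triangle (ultrametric) inequality. I would estimate
\[
\left|\sum_{k=0}^{\infty} k!\binom{l+k}{k}^2 t^k\right|_p
\le \sup_{k\ge 0}\left| k!\binom{l+k}{k}^2\right|_p |t|_p^{k}
\le \sup_{k\ge 0}|t|_p^{k}=1,
\]
the last equality holding because $|t|_p\le 1$ forces $|t|_p^k\le 1$ with equality at $k=0$. Multiplying by $|l!^2 t^{2l}|_p=|l!|_p^2|t|_p^{2l}$ yields $|R_l(t)|_p\le |l!|_p^2|t|_p^{2l}$, which is the claimed bound.

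For the ``in particular'' statement with $t=\pm p^a$, I would upgrade the inequality to an equality by exhibiting a uniquely dominant term. Here $|t|_p=p^{-a}$, so the $k=0$ term of the series equals $1$ with $|1|_p=1$, whereas for every $k\ge 1$ one has $\bigl|k!\binom{l+k}{k}^2(\pm p^a)^k\bigr|_p\le p^{-ak}\le p^{-a}<1$ since $a\ge 1$. Thus the $k=0$ term strictly dominates all others, and the isosceles property of the ultrametric gives $\bigl|\sum_{k\ge 0}k!\binom{l+k}{k}^2 t^k\bigr|_p=1$ exactly. Multiplying by $|l!|_p^2|t|_p^{2l}=|l!|_p^2 p^{-2la}$ produces the stated equality $|R_l(\pm p^a)|_p=|l!|_p^2 p^{-2la}$.

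There is no genuine obstacle here; the estimate is a clean application of the ultrametric inequality. The only two points demanding care are the integrality of $k!\binom{l+k}{k}^2$ (so that the coefficients do not enlarge the $p$-adic size) and, for the equality, the verification that the $k=0$ term strictly dominates — which relies precisely on the hypothesis $a\ge 1$ making $|t|_p<1$.
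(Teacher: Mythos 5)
Your proposal is correct and follows essentially the same route as the paper: read the bound off the series expansion \eqref{Remainderseries}, observe that the coefficients $k!\binom{l+k}{k}^2$ are integers, and apply the ultrametric inequality after factoring out $|l!^2 t^{2l}|_p$. In fact you are slightly more complete than the paper, which only writes out the inequality and leaves implicit the dominant-term (isosceles) argument you give for the equality at $t=\pm p^a$.
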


\begin{proof}
We have
\begin{equation*}\label{}
\left| R_n\left(t\right) \right|_p  = \left| l!^2 t^{2l} \right|_p \left| \sum_{k=0}^{\infty} k! \binom{l+k}{k}^2 t^k \right|_p 
\le |l!|_p^2\, |t|_p^{2l} \max_k \left\{ \left| k! \binom{l+k}{k}^2 t^k \right|_p \right\}
\le |l!|_p^2\, |t|_p^{2l}. 
\end{equation*}
This proves the claim.
\end{proof}

For the polynomial $Q_n(t)$ we get the following bound:

\begin{lemma}\label{polrajaQ}
Let $t\in \mathbb{R} \setminus \{0\}$. Then
\begin{equation*}\label{}
|Q_n(t)| \leq n!|t|^n e^{2\sqrt{\frac{n}{|t|}}}.
\end{equation*}
\end{lemma}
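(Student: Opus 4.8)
The plan is to work directly from the explicit coefficient formula for $Q_n$ supplied in Lemma \ref{EULERPADE} and to reduce everything to a single clean estimate for a Bessel-type power series. First I would pass to absolute values by the triangle inequality: since
\begin{equation*}
Q_n(t)=\sum_{h=0}^{n}h!\binom{n}{h}^2(-t)^h
\end{equation*}
and $|(-t)^h|=|t|^h$ regardless of the sign of $t$, we get $|Q_n(t)|\le\sum_{h=0}^{n}h!\binom{n}{h}^2|t|^h$. I would then rewrite each coefficient using $\binom{n}{h}=\frac{n!}{h!(n-h)!}$, so that $h!\binom{n}{h}^2=\frac{(n!)^2}{h!\,((n-h)!)^2}$, and substitute $j=n-h$ while pulling out a factor $n!\,|t|^n$, yielding
\begin{equation*}
|Q_n(t)|\le n!\,|t|^n\sum_{j=0}^{n}\frac{n!}{(n-j)!\,(j!)^2}\,|t|^{-j}.
\end{equation*}

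Next I would use the elementary estimate $\frac{n!}{(n-j)!}=n(n-1)\cdots(n-j+1)\le n^j$ to bound the inner sum by $\sum_{j=0}^{\infty}\frac{(n/|t|)^j}{(j!)^2}$. At this point the whole statement reduces to the inequality $\sum_{j=0}^{\infty}\frac{x^j}{(j!)^2}\le e^{2\sqrt{x}}$ for $x\ge 0$, applied with $x=n/|t|$. To prove this I would recognise $e^{2\sqrt{x}}$ as the square of an exponential series,
\begin{equation*}
e^{2\sqrt{x}}=\left(\sum_{i\ge 0}\frac{(\sqrt{x})^i}{i!}\right)^2=\sum_{i,j\ge 0}\frac{(\sqrt{x})^{i+j}}{i!\,j!},
\end{equation*}
whose terms are all nonnegative for $x\ge 0$; the diagonal contributions $i=j$ alone already sum to $\sum_{j}\frac{x^j}{(j!)^2}$, so the full double sum dominates and the inequality follows.

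Combining these pieces gives $|Q_n(t)|\le n!\,|t|^n e^{2\sqrt{n/|t|}}$, as required. I do not expect a genuine obstacle in this argument: the only step calling for a little ingenuity rather than routine bookkeeping is spotting the Bessel-type series and dominating it by the square-of-exponential expansion; the remaining ingredients are the triangle inequality, the binomial rearrangement, and the estimate $n!/(n-j)!\le n^j$.
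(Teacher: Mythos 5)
Your proof is correct, but it takes a genuinely different route from the paper's. The paper never touches the coefficient sum: it writes $Q_n(t)=(-t)^n L_n\!\left(\frac{1}{t}\right)$ with $L_n(t)=e^tD^n\left(e^{-t}t^n\right)$, represents $L_n$ by Cauchy's integral formula over a circle of radius $r$ centred at $t$, gets $|L_n(t)|\le n!\,e^{r}\left(1+\frac{|t|}{r}\right)^n\le n!\,e^{r+\frac{n|t|}{r}}$, and then optimizes over the free parameter $r$; the minimum of $r+\frac{n|t|}{r}$ at $r=\sqrt{n|t|}$ produces the exponent $2\sqrt{n|t|}$, and the substitution $t\mapsto 1/t$ gives the lemma. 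You instead expand $Q_n$ explicitly, pass to absolute values, use the identity $h!\binom{n}{h}^2=\frac{(n!)^2}{h!\,((n-h)!)^2}$ and the bound $\frac{n!}{(n-j)!}\le n^j$ to reduce everything to the series $\sum_{j\ge 0}\frac{x^j}{(j!)^2}$ with $x=\frac{n}{|t|}$ (the modified Bessel function $I_0(2\sqrt{x})$), and dominate it by the diagonal of the squared exponential series. All of these steps check out, and both arguments land on exactly the same bound. Each approach buys something: yours is entirely elementary --- no contour integrals, no optimization --- and it isolates the analytic content of the lemma in the transparent inequality $I_0(y)\le e^y$, which (since $I_0(y)\sim e^y/\sqrt{2\pi y}$) also shows that the bound cannot be improved beyond a polynomial factor by this kind of estimate; the paper's contour-integral argument needs no knowledge of the coefficients of $L_n$ at all, so it transfers directly to other polynomials defined by Rodrigues-type formulas, and it is the same device the paper reuses elsewhere (e.g.\ the derivative representation of $L_n$ in the proofs of Lemmas \ref{HARDYAPPROX} and \ref{POLREC}), which is presumably why the authors chose it.
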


\begin{proof}
First we estimate the term $L_n(t)$.
Let $\gamma=\left\{ \left. z=t+re^{i\alpha} \; \right| \; \alpha:0\to 2\pi,\ r>0\right\}$ be a path around $t$. Then
\begin{align*}
L_n(t)&=e^tD^n\left(e^{-t}t^n\right)\nonumber\\
&=e^t \frac{n!}{2\pi i}  \oint_{\gamma} \frac{e^{-z}z^n}{(z-t)^{n+1}}\d z\nonumber \\
&=\frac{n!}{2\pi i}  \int_{0}^{2\pi} \frac{e^{-re^{i\alpha}} (re^{i\alpha} +t)^n}{(re^{i\alpha})^{n+1}} re^{i\alpha}i\d\alpha. \nonumber
\end{align*}
Thus we get the bound
\begin{equation*}
|L_n(t)| \le n! \cdot \frac{e^{r} (r+|t|)^n}{r^{n}},
\end{equation*}
valid for all $r>0$. We may now further estimate this expression. We have
\[
n! \cdot \frac{e^{r} (r+|t|)^n}{r^{n}}
= n! e^{r} \left(1+\frac{|t|}{r}\right)^n
= n! e^{r} \left(\left(1+\frac{1}{\frac{r}{|t|}}\right)^{\frac{r}{|t|}}\right)^{\frac{n|t|}{r}}
\leq n!e^{r+\frac{n|t|}{r}}.
\]
The function $r+\frac{n|t|}{r}$ obtains its minimum when $r=\sqrt{n|t|}$, and this minimum is $2\sqrt{n|t|}$. Hence, we have proved
\begin{equation*}\label{}
|L_n(t)|\leq n!e^{2\sqrt{n|t|}}.
\end{equation*}
The bound for $|Q_n(t)|$ follows from $Q_n(t)=(-t)^n L_n\left(\frac{1}{t}\right)$. 
\end{proof}

\begin{lemma}\label{PPOLRAJA1}
Let $t\in \mathbb{R}$, $t<0$. Then
\begin{equation*}\label{}
|P_n(t)| \leq 2 n!|t|^n e^{2\sqrt{\frac{n}{|t|}}}.
\end{equation*}
\end{lemma}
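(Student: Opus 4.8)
The plan is to exploit the Archimedean Pad\'e identity of Lemma \ref{HARDYAPPROX}, which rearranges to $P_n(t) = Q_n(t)\mathcal{H}(t) - \mathcal{R}_n(t)$, and to estimate each ingredient separately in the regime $t<0$, where all the relevant integrals converge. The triangle inequality then gives
$$
|P_n(t)| \le |Q_n(t)|\,|\mathcal{H}(t)| + |\mathcal{R}_n(t)|,
$$
so it suffices to produce clean bounds for $|\mathcal{H}(t)|$ and $|\mathcal{R}_n(t)|$ and to combine them with the already established bound $|Q_n(t)| \le n!|t|^n e^{2\sqrt{n/|t|}}$ from Lemma \ref{polrajaQ}.

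First I would bound the Hardy integral. For $t<0$ we have $1-ts = 1+|t|s \ge 1$ for all $s\ge 0$, so the integrand of $\mathcal{H}(t)=\int_0^\infty \frac{e^{-s}}{1-ts}\,\d s$ is dominated by $e^{-s}$, whence $|\mathcal{H}(t)| \le \int_0^\infty e^{-s}\,\d s = 1$. This already yields $|Q_n(t)\mathcal{H}(t)| \le n!|t|^n e^{2\sqrt{n/|t|}}$.

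Next I would estimate the remainder. Writing $t^{2n}=|t|^{2n}$ and $1-ts=1+|t|s$, the integral representation $\mathcal{R}_n(t)=n!t^{2n}\int_0^\infty \frac{s^n e^{-s}}{(1-ts)^{n+1}}\,\d s$ becomes
$$
|\mathcal{R}_n(t)| = n!\,|t|^n \int_0^\infty \frac{(|t|s)^n}{(1+|t|s)^{n+1}}\,e^{-s}\,\d s.
$$
The key elementary inequality is that $\frac{x^n}{(1+x)^{n+1}} \le 1$ for every $x\ge 0$ (indeed $x^n \le (1+x)^n \le (1+x)^{n+1}$), applied with $x=|t|s$. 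This collapses the integral to $\int_0^\infty e^{-s}\,\d s = 1$, so $|\mathcal{R}_n(t)| \le n!\,|t|^n$.

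Finally, since $e^{2\sqrt{n/|t|}} \ge 1$, both terms in the triangle inequality are bounded by $n!|t|^n e^{2\sqrt{n/|t|}}$, and adding them produces the claimed factor $2$. The argument is essentially a routine chain of integral estimates; the only point that requires any care is recognising that $\mathcal{R}_n(t)$ must be controlled by the \emph{same} quantity $n!|t|^n e^{2\sqrt{n/|t|}}$ that dominates $Q_n(t)\mathcal{H}(t)$, so that the constant $2$ emerges cleanly, and this is exactly what the crude bound $\frac{x^n}{(1+x)^{n+1}}\le 1$ delivers.
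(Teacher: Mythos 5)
Your proof is correct and follows essentially the same route as the paper: the identity $P_n(t)=Q_n(t)\mathcal{H}(t)-\mathcal{R}_n(t)$ from Lemma \ref{HARDYAPPROX}, the bound $|\mathcal{H}(t)|\le 1$ for $t<0$, the bound on $|Q_n(t)|$ from Lemma \ref{polrajaQ}, and a pointwise estimate of the remainder integrand. The only (harmless) difference is that you bound the integrand factor by $\frac{x^n}{(1+x)^{n+1}}\le 1$, whereas the paper uses the sharper maximum $\frac{n^n}{(n+1)^{n+1}}$; both give the factor $2$.
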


\begin{proof}
From  Lemma \ref{HARDYAPPROX}  we get $P_n(t) = Q_n(t) \mathcal{H}(t) - \mathcal{R}_n(t)$.
Therefore, we need estimates for $|\mathcal{H}(t)|$ and $|\mathcal{R}_n(t)|$. Immediately, 
\begin{equation*}
|\mathcal{H}(t)| = \int_{0}^{\infty} \frac{e^{-s}}{1-ts}\d s \le \int_{0}^{\infty} e^{-s} \d s =1, 
\end{equation*}
for all $t\le 0$. Next (following \cite{TAWA2018}),
\begin{equation}\label{remainderestimate}
\begin{split}
|\mathcal{R}_n(t)| &=	n! |t|^{2n} \int_{0}^{\infty} \frac{e^{-s}s^n}{(1-ts)^{n+1}}\d s \\
&\le n! |t|^{2n} \max_{s\ge 0} \left\{\frac{s^n}{(1-ts)^{n+1}} \right\} \int_{0}^{\infty} e^{-s} \d s \\
&= n! |t|^{n} \cdot \frac{n^n}{(n+1)^{n+1}}. 
\end{split}
\end{equation}
Thereby
\begin{align*}
|P_n(t)| &\le  |Q_n(t)|\,|\mathcal{H}(t)| + |\mathcal{R}_n(t)| \\
&\le n! |t|^{n} \left(e^{2\sqrt{\frac{n}{|t|}}} + \frac{n^n}{(n+1)^{n+1}}\right)\\
&\le n! |t|^{n} e^{2\sqrt{\frac{n}{|t|}}}\left(1 + \frac{n^n}{(n+1)^{n+1}e^{2\sqrt{\frac{n}{|t|}}}}\right)\\
&\le 2 n! |t|^{n} e^{2\sqrt{\frac{n}{|t|}}}.\\
\end{align*}
\end{proof}

The above proof was based on the integral representation
\begin{equation*}
\mathcal{H}(t) = \int_{0}^{\infty} \frac{e^{-s}}{1-ts}\d s, 
\end{equation*}
which does not converge if $t\in \mathbb{R}^+$.
In the case $t>0$ we use analytic continuation, and for that, we need the gamma function.

Recall that the gamma function $\Gamma(x)$ is defined by
\begin{equation*}
\Gamma(x+1) = \int_{0}^{\infty} e^{-s}s^x\d s 
\end{equation*}
for $x\in \mathbb{R}^+$. It satisfies the functional equation $\Gamma(x+1) = x \Gamma(x)$ for $x\in \mathbb{R}^+$.

\begin{lemma}
Let $b\in \mathbb{R}$ and choose $\tau :=\{s=y(1+bi)\; | \; y:0\to \infty\}$.
Then
\begin{equation}\label{gammaproperty}
\Gamma(n+1) = \int_{\tau} e^{-s}s^n\d s 
\end{equation}
for all $n\in\mathbb{Z}_{\ge 0}$.
\end{lemma}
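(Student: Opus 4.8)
The plan is to run the same contour–shifting argument as in the analytic continuation of $\mathcal{H}(t)$ in Section~\ref{Analyticcontinuation}, but now applied to the entire integrand $f(s)=e^{-s}s^n$. Because $n\in\mathbb{Z}_{\ge0}$, the factor $s^n$ is a polynomial and $f$ is holomorphic on all of $\mathbb{C}$; this is the decisive simplification, since no pole can lie inside a closed contour and Cauchy's theorem yields a vanishing loop integral with no residue term $\theta_{\alpha}$ to track.

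First I would write $1+bi=\rho e^{i\alpha}$ with $\rho>0$ and $\alpha=\arctan b\in\left(-\tfrac{\pi}{2},\tfrac{\pi}{2}\right)$; then the ray $\tau$ coincides, as an oriented path, with $\tau_{\alpha}$ from Section~\ref{Analyticcontinuation}, the positive rescaling by $\rho$ leaving the integral unchanged. The case $b=0$ is the defining formula for $\Gamma(n+1)$, so I would assume $\alpha\neq0$ and, by symmetry, treat $\alpha>0$. Forming the loop $\gamma_{R,\alpha}=\tau_{R,0}\cup\kappa_{R,\alpha}\cup\tau^{-1}_{R,\alpha}$ exactly as before and applying Cauchy's theorem to $f$ gives
\begin{equation*}
\int_{\tau_{R,0}} e^{-s}s^n\,\d s + \int_{\kappa_{R,\alpha}} e^{-s}s^n\,\d s + \int_{\tau^{-1}_{R,\alpha}} e^{-s}s^n\,\d s = 0.
\end{equation*}

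The crux is to show the arc contribution vanishes. Parametrising $s=Re^{i\beta}$, $0\le\beta\le\alpha$, and using that $\cos\beta$ is decreasing on $[0,\tfrac{\pi}{2}]$ so that $\cos\beta\ge\cos\alpha>0$, I would estimate $|e^{-s}s^n|=e^{-R\cos\beta}R^n\le e^{-R\cos\alpha}R^n$; since the arc has length $R\alpha$,
\begin{equation*}
\left| \int_{\kappa_{R,\alpha}} e^{-s}s^n\,\d s \right| \le \alpha\, R^{n+1} e^{-R\cos\alpha} \longrightarrow 0 \quad (R\to\infty).
\end{equation*}
Passing to the limit in the loop identity then gives $\int_{\tau_{\alpha}}e^{-s}s^n\,\d s=\int_{\tau_{0}}e^{-s}s^n\,\d s=\Gamma(n+1)$, which is the assertion. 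The only delicate point is the uniformity of this estimate, namely that $\cos\alpha$ stays strictly positive so that the exponential decay defeats the polynomial growth $R^{n+1}$; but this holds automatically, since $b\in\mathbb{R}$ forces $\alpha\in\left(-\tfrac{\pi}{2},\tfrac{\pi}{2}\right)$. For $b<0$ one repeats the argument with the arc traversed clockwise, and no further change is required.
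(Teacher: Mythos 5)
Your proof is correct, but it takes a genuinely different route from the paper's. The paper proves the lemma by induction on $n$ with repeated integration by parts along the ray $\tau$ itself: the base case $n=0$ is computed directly, and the inductive step uses that the boundary term $e^{-y(1+bi)}\left(y(1+bi)\right)^{k+1}$ vanishes at both ends (since $\Re(1+bi)=1>0$), giving $\int_{\tau}e^{-s}s^{k+1}\,\d s=(k+1)\int_{\tau}e^{-s}s^{k}\,\d s$ and hence the functional equation of $\Gamma$. You instead rotate the contour: identifying $\tau$ with the ray $\tau_{\alpha}$, $\alpha=\arctan b$, applying Cauchy's theorem to the entire function $e^{-s}s^{n}$ on the loop $\gamma_{R,\alpha}$, and killing the arc term with the estimate $\alpha R^{n+1}e^{-R\cos\alpha}\to 0$, which is valid precisely because $\alpha\in\left(-\frac{\pi}{2},\frac{\pi}{2}\right)$ keeps $\cos\alpha>0$. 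Both arguments are complete (your limiting step is legitimate: the existence of $\lim_{R\to\infty}\int_{\tau_{R,\alpha}}e^{-s}s^n\,\d s$ follows either from the absolute convergence of the ray integral or from the convergence of the other two terms in the loop identity). What each buys: the paper's induction is elementary and self-contained, needing nothing beyond one-variable calculus on the parametrised path, and it directly mirrors the functional equation $\Gamma(x+1)=x\Gamma(x)$ quoted just before the lemma. Your argument handles all $n$ at once with no induction, reuses the contour machinery of Section \ref{Analyticcontinuation} (so the paper's two contour-shifting arguments become one uniform device), and is more conceptual --- it exhibits the result as rotation invariance of an integral of an entire function with decay in the sector, which would extend with only branch-cut care to non-integer exponents.
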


\begin{proof}
The proof is a standard application of partial integration. Let first $n=0$. Then
\begin{equation*}
\int_{\tau} e^{-s}\d s = \int_0^\infty e^{-y(1+bi)} (1+bi) \d y = \bigg/_{\!\!\!\!\!\!0}^\infty -e^{-y(1+bi)} = 1=\Gamma(1). 
\end{equation*}
Suppose then that \eqref{gammaproperty} is true for any $n=k\ge 0$. Now
\begin{align*}
\int_{\tau} e^{-s}s^{k+1}\d s &= \int_0^\infty e^{-y(1+bi)} (y(1+bi))^{k+1} (1+bi) \d y\\
&= \bigg/_{\!\!\!\!\!\!0}^\infty -e^{-y(1+bi)} (y(1+bi))^{k+1} + \int_0^\infty (k+1) e^{-y(1+bi)} (y(1+bi))^k (1+bi)\d y\\
&= 0 + (k+1) \int_\tau e^{-s} s^k \d s\\
& = (k+1)\Gamma(k+1) \\
&= \Gamma(k+2).
\end{align*}
\end{proof}

\begin{lemma}\label{Ppolraja2}
Let $t\in \mathbb{R}$, $t>0$. Then
\begin{equation*}\label{}
|P_n(t)| \leq 2 n! B t^{n}  e^{2\sqrt{\frac{n}{t}}},\quad B= B(n,t)=\left(\frac{nt}{4} \right)^{\frac{1}{4}}+\left( \frac{nt}{4} \right)^{-\frac{1}{4}}.
\end{equation*}
\end{lemma}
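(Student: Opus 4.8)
The plan is to transport the Pad\'e identity of Lemma \ref{HARDYAPPROX} to the positive real axis by analytic continuation and then to estimate the three resulting factors, optimizing a free parameter at the end. Fix $b\in\mathbb{R}^+$ and let $\tau=\{s=y(1+bi)\mid y:0\to\infty\}$ be the ray used above. Since the pole $s=1/t$ of the integrand lies on the positive real axis and hence off $\tau$, the continuations $\mathcal{H}^\tau(t)=\int_\tau \frac{e^{-s}}{1-ts}\,\d s$ and $\mathcal{R}_n^\tau(t)=n!\,t^{2n}\int_\tau\frac{e^{-s}s^n}{(1-ts)^{n+1}}\,\d s$ converge for $t>0$. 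Repeating the derivation of Lemma \ref{HARDYAPPROX} verbatim with $\int_0^\infty$ replaced by $\int_\tau$ — the boundary terms at $s=0$ and along $\tau$ at infinity vanish exactly as before, and \eqref{gammaproperty} guarantees the relevant moments — yields $Q_n(t)\mathcal{H}^\tau(t)-P_n(t)=\mathcal{R}_n^\tau(t)$ for $t>0$. The numerator stays $P_n$ because the integrand defining it, $\frac{(L_n(s)-L_n(1/t))e^{-s}}{s-1/t}$, is entire in $s$, so its integral over $\tau$ and over $[0,\infty)$ agree by contour deformation. Hence $P_n(t)=Q_n(t)\mathcal{H}^\tau(t)-\mathcal{R}_n^\tau(t)$, and it remains to bound the factors and optimize in $b$.

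For the denominator polynomial, Lemma \ref{polrajaQ} gives $|Q_n(t)|\le n!\,t^n e^{2\sqrt{n/t}}$. For the two integrals, everything hinges on a lower bound for $|1-ts|$ on $\tau$. Writing $s=y(1+bi)$ we have $|1-ts|^2=(1-ty)^2+t^2y^2b^2$, whose minimum over $y\ge 0$ is $\frac{b^2}{1+b^2}$; thus $|1-ts|\ge \frac{b}{\sqrt{1+b^2}}$. Since $|\d s|=\sqrt{1+b^2}\,\d y$ and $|e^{-s}|=e^{-y}$,
\[
|\mathcal{H}^\tau(t)|\le \sqrt{1+b^2}\int_0^\infty \frac{e^{-y}}{|1-ts|}\,\d y\le \frac{1+b^2}{b}.
\]
For the remainder I would additionally use the cruder bound $|1-ts|\ge tyb$ (the imaginary part) on $n$ of the $n+1$ denominator factors, keeping the sharp bound for the last one, so that
\[
|\mathcal{R}_n^\tau(t)|\le n!\,t^{2n}\Big(\tfrac{\sqrt{1+b^2}}{tb}\Big)^{n}\sqrt{1+b^2}\int_0^\infty\frac{e^{-y}}{|1-ts|}\,\d y\le n!\,t^{n}\,\frac{(1+b^2)^{(n+2)/2}}{b^{\,n+1}} .
\]

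Now I optimize by choosing $b=(nt/4)^{1/4}$, so that $\frac{1+b^2}{b}=b+\frac1b=(nt/4)^{1/4}+(nt/4)^{-1/4}=B$; with this choice $|\mathcal{H}^\tau(t)|\le B$ and $|Q_n(t)\mathcal{H}^\tau(t)|\le n!\,B\,t^n e^{2\sqrt{n/t}}$. For the remainder I rewrite $\frac{(1+b^2)^{(n+2)/2}}{b^{n+1}}=(1+b^{-2})^{n/2}\,\frac{1+b^2}{b}\le e^{n/(2b^2)}B$ and note that $b^2=\tfrac12\sqrt{nt}$ makes $\frac{n}{2b^2}=\sqrt{n/t}$, whence $|\mathcal{R}_n^\tau(t)|\le n!\,B\,t^n e^{\sqrt{n/t}}\le n!\,B\,t^n e^{2\sqrt{n/t}}$. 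Adding the two contributions gives $|P_n(t)|\le |Q_n(t)\mathcal{H}^\tau(t)|+|\mathcal{R}_n^\tau(t)|\le 2\,n!\,B\,t^n e^{2\sqrt{n/t}}$, as claimed.

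The routine parts are the two integral estimates, which reduce to the elementary minimization of $(1-ty)^2+t^2y^2b^2$ and to $\int_0^\infty e^{-y}\,\d y=1$. The delicate points are, first, the justification that the Pad\'e identity survives the passage to the contour $\tau$ with the numerator unchanged (this is where analyticity of the numerator integrand and \eqref{gammaproperty} enter), and second, the choice $b=(nt/4)^{1/4}$: it is exactly this value that converts the harmless factor $\frac{1+b^2}{b}$ into $B$ while keeping the remainder's growth at $e^{\sqrt{n/t}}$, a full $e^{\sqrt{n/t}}$ better than $|Q_n\mathcal{H}^\tau|$, so that the two terms combine into the clean constant $2$. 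I expect locating this optimal $b$ — and verifying that it simultaneously tames $\mathcal{H}^\tau$ and $\mathcal{R}_n^\tau$ — to be the main obstacle.
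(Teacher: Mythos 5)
Your proof is correct and follows essentially the same route as the paper: the same ray $\tau=\{y(1+bi)\}$, the same transported Pad\'e identity $Q_n\mathcal{H}^\tau-P_n=\mathcal{R}_n^\tau$, the same two estimates (the minimum $b^2/(1+b^2)$ of $|1-ts|^2$ and the bound $y/|1-ts|\le 1/(tb)$, which the paper writes as $I(y)$), and the identical optimal choice $b=(nt/4)^{1/4}$. The only cosmetic difference is that you justify $\mathcal{P}_n^\tau=P_n$ by contour deformation of the entire integrand, while the paper expands it into moments and invokes \eqref{gammaproperty} term by term — the two justifications are interchangeable.
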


\begin{proof}
Let $b \in \mathbb{R}^+$ and $\tau :=\{s=y(1+bi)\; | \; y:0\to \infty\}$. Note that 
$\tau = \tau_{\beta} = \left\{ \left. re^{i\beta}\; \right| \; r:0\to \infty,\ \beta= \arctan b \right\}$, where $0<\beta<\frac{\pi}{2}$.
Then  
\begin{equation*}
\mathcal{H}_{-\beta}(t) := \int_{\tau_{\beta}} \frac{e^{-s}}{1-ts}\d s 
\end{equation*}
is defined for all $t \ge 0$. Set
\begin{equation*}\label{}
\mathcal{P}_n^{\tau}(t) := -(-1)^n t^{n} \mathlarger\int_{\tau} \frac{\left(L_n(s)-L_n\left(\frac{1}{t}\right)\right)e^{-s}}{1-ts}\d s. 
\end{equation*}
Following \eqref{secondclaim}, we deduce
\begin{align*}
Q_n(t) & \mathcal{H}_{-\beta}(t)  -\mathcal{P}_n^{\tau}(t) \\
&= (-t)^n L_n\!\left(\frac{1}{t}\right) \int_{\tau} \frac{e^{-s}}{1-ts}\d s + (-1)^n t^{n} \mathlarger\int_{\tau} \frac{\left(L_n(s)-L_n\left(\frac{1}{t}\right)\right)e^{-s}}{1-ts}\d s\\
&=(-t)^n  \int_{\tau} \frac{L_n(s)e^{-s}}{1-ts}\d s \\
&=(-t)^n  \int_{\tau} \frac{D^n\left(e^{-s}s^n\right)}{1-ts}\d s. 
\end{align*}
Where again we apply partial integration repeatedly  to obtain
\begin{equation*}\label{}
Q_n(t) \mathcal{H}_{-\beta}(t)  - \mathcal{P}_n^{\tau}(t) = n! t^{2n} \int_{\tau} \frac{e^{-s}s^n}{(1-ts)^{n+1}}\d s := \mathcal{R}_n^{\tau}(t). 
\end{equation*}

Next we show that
\begin{equation*}\label{}
\mathcal{P}_n^{\tau}(t) = P_n(t). 
\end{equation*}
Let us  denote $L_n(s)=\sum_{k=0}^{n} l_ks^k$. Then 
\begin{align*}
\mathcal{P}_n^{\tau}(t) & = (-1)^n t^{n-1} \mathlarger\int_{\tau} \frac{\left(L_n(s)-L_n\left(\frac{1}{t}\right)\right)e^{-s}}{s-\frac{1}{t}}\d s\\
& = (-1)^n t^{n-1} \mathlarger\int_{\tau} 	\sum_{k=0}^{n} l_k \left(s^{k-1}+\frac{s^{k-2}}{t} + \frac{s^{k-3}}{t^2} + \ldots + \frac{1}{t^{k-1}} \right) e^{-s} \d s\\
& = (-1)^n t^{n-1} \sum_{k=0}^{n} l_k \left(\int_{\tau} s^{k-1}e^{-s}\d s  + \frac{1}{t} \int_{\tau} s^{k-2}e^{-s}\d s + \ldots +\frac{1}{t^{k-1}} \int_{\tau} e^{-s}\d s \right)\\
& = (-1)^n t^{n-1} \sum_{k=0}^{n} l_k \left(\Gamma(k) + \frac{\Gamma(k-1)}{t} + \ldots + \frac{\Gamma(1)}{t^{k-1}} \right) = P_n(t).
\end{align*}

Now we are ready to proceed as in the proof of Lemma \ref{PPOLRAJA1}.
First we have
\begin{equation}\label{Ftauestimate}
\begin{split}
|\mathcal{H}_{-\beta}(t)| &\le \int_{0}^{\infty} \left| \frac{e^{-y}e^{-iby}}{1-ty-ibty}(1+bi)\right|\d y \\
&= \sqrt{1+b^2} \mathlarger\int_{0}^{\infty} \frac{e^{-y}}{\sqrt{(ty-1)^2+(bty)^2}}\d y \\
&= b+\frac{1}{b},
\end{split}				
\end{equation}
where $(ty-1)^2+(bty)^2$ attains its minimum  $\frac{b^2}{1+b^2}$ at $y=\frac{1}{t(1+b^2)}$.

Next we bound the remainder term in a similar manner:
\begin{equation*}\label{}
\begin{split}
|\mathcal{R}_n^{\tau}(t)| &=  n! t^{2n} \left| \int_{\tau} \frac{e^{-s}s^n}{(1-ts)^{n+1}} \d s \right|\\
&\le n! t^{2n} \left(\sqrt{1+b^2}\right)^{n+1} \mathlarger{\mathlarger\int_{0}^{\infty}} \frac{e^{-y}y^n}{\left(\sqrt{(ty-1)^2+(bty)^2}\right)^{n+1}} \d y  \\
&\le n! t^{2n} \left(\sqrt{1+b^2}\right)^{n+1} \max_{y \in [0,\infty[} \left\{ \frac{y^n}{\left(\sqrt{(ty-1)^2+(bty)^2}\right)^{n+1}} \right\} \int_{0}^{\infty} e^{-y} \d y.
\end{split}				
\end{equation*}
Write
\begin{equation*}\label{}
I(y):= \frac{y^n}{\left(\sqrt{(ty-1)^2+(bty)^2}\right)^{n+1}}
= \frac{1}{\left(\sqrt{(t-\frac{1}{y})^2+(bt)^2}\right)^{n}} \cdot \frac{1}{\sqrt{(ty-1)^2+(bty)^2}}.
\end{equation*}
Then
\begin{equation}\label{melkeinmax}
I(y)\le \frac{1}{\left(\sqrt{(bt)^2}\right)^{n}} \cdot \frac{1}{\sqrt{\frac{b^2}{1+b^2}}} =
\frac{\sqrt{1+b^2}}{t^{n}b^{n+1}},\quad 0 \le y < \infty,
\end{equation}
where the estimate in the second term is coming from in \eqref{Ftauestimate}.
But 
\begin{equation*}\label{}
I\!\left(\frac{1}{t}\right) = \frac{1}{t^{n}b^{n+1}},
\end{equation*}
which shows that we can not improve \eqref{melkeinmax} essentially.
Consequently
\begin{equation*}\label{}
|\mathcal{R}_n^{\tau}(t)| \le n! t^{n} \left(b+\frac{1}{b}\right) \left(\sqrt{1+\frac{1}{b^2}}\right)^{n}. 
\end{equation*}

Finally
\begin{equation*}\label{}
|P_n(t)| \le  |Q_n(t)|\,|\mathcal{H}_{-\beta}(t)| + |\mathcal{R}_n^{\tau}(t)|
\le n! t^{n} \left(b+\frac{1}{b}\right)\left(e^{2\sqrt{\frac{n}{t}}} +\left(\sqrt{1+\frac{1}{b^2}}\right)^{n}\right).
\end{equation*}
Put now $b= \left(\frac{nt}{4} \right)^{\frac{1}{4}}$. Then
\begin{equation*}\label{}
|P_n(t)| \le 2 n! t^{n}  e^{2\sqrt{\frac{n}{t}}} \left( \left(\frac{nt}{4} \right)^{\frac{1}{4}} + \left(\frac{nt}{4} \right)^{-\frac{1}{4}}\right).
\end{equation*}
\end{proof}

\section{Recurrences}

\begin{lemma}\label{POLREC}
The Pad\'e polynomials $P_l(t)$ and  $Q_l(t)$ satisfy the recurrences
\begin{equation}\label{POLRECURRENCES}
\begin{split}
P_l(t) & = (1-(2l-1)t) P_{l-1}(t) - (l-1)^2t^2 P_{l-2}(t),\\
Q_l(t) & = (1-(2l-1)t) Q_{l-1}(t) - (l-1)^2t^2 Q_{l-2}(t),
\end{split}
\end{equation}
respectively with the initial values 
$P_0(t)=0$, $P_1(t)=1$, $Q_0(t)=1$ and $Q_1(t)=1-t$. 
\end{lemma}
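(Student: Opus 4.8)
The plan is to establish the two recurrences in \eqref{POLRECURRENCES} by first proving the $Q_l$ recurrence, then transferring it to $P_l$ and $R_l$ by the rigidity of the Pad\'e identity \eqref{PADE3}. I would start from the explicit formula
\[
Q_l(t)=\sum_{h=0}^{l}h!\binom{l}{h}^2(-t)^h.
\]
The hoped-for relation is
\[
Q_l(t)-(1-(2l-1)t)Q_{l-1}(t)+(l-1)^2t^2Q_{l-2}(t)=0.
\]
The natural approach is to compare coefficients of $(-t)^h$ on the left-hand side. Writing $q_{l,h}:=h!\binom{l}{h}^2$ for the coefficient of $(-t)^h$ in $Q_l(t)$, expanding each term and collecting powers of $(-t)$ reduces the whole statement to a single binomial identity relating $q_{l,h}$, $q_{l-1,h}$, $q_{l-1,h-1}$, and $q_{l-2,h-1}$. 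I would verify that identity directly.

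\textbf{The coefficient identity.}
Concretely, the coefficient of $(-t)^h$ in $(1-(2l-1)t)Q_{l-1}(t)$ is $q_{l-1,h}+(2l-1)q_{l-1,h-1}$ (the sign of the $(2l-1)t$ term matches because multiplying $(-t)^{h-1}$ by $-(2l-1)t$ gives $(2l-1)(-t)^h$), while the coefficient of $(-t)^h$ in $(l-1)^2t^2Q_{l-2}(t)$ is $(l-1)^2q_{l-2,h-2}$. Hence the identity to check is
\[
h!\binom{l}{h}^2=h!\binom{l-1}{h}^2+(2l-1)(h-1)!\binom{l-1}{h-1}^2+(l-1)^2(h-2)!\binom{l-2}{h-2}^2.
\]
I expect \textbf{this purely combinatorial identity to be the main obstacle}, since it mixes factorials with squared binomials and does not collapse under the elementary Pascal rule alone. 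The cleanest route I foresee is to divide through by $(h-1)!\binom{l-1}{h-1}^2$ and reduce everything to rational functions in $l$ and $h$ via $\binom{l}{h}/\binom{l-1}{h-1}=l/h$ and $\binom{l-1}{h}/\binom{l-1}{h-1}=(l-h)/h$; the claim then becomes a polynomial identity in $l,h$ that can be expanded and checked term by term. The boundary cases $h=0,1$ (where some binomials vanish) must be handled separately, as must the initial values $Q_0=1$, $Q_1=1-t$, which are read off directly from the formula.

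\textbf{Transferring to $P_l$ and closing the argument.}
Once the $Q_l$ recurrence holds, I would define the operator
\[
\mathcal{L}[F]:=F_l-(1-(2l-1)t)F_{l-1}+(l-1)^2t^2F_{l-2}
\]
acting on any sequence $(F_l)$ of power series, and apply it to the identity $R_l(t)=Q_l(t)E(t)-P_l(t)$. Since $E(t)$ is a fixed series independent of $l$, linearity gives $\mathcal{L}[R](t)=\mathcal{L}[Q](t)\,E(t)-\mathcal{L}[P](t)=-\mathcal{L}[P](t)$, using $\mathcal{L}[Q]=0$. Thus $P_l$ satisfies the same recurrence if and only if $\mathcal{L}[R]=0$. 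To see the latter I would invoke the order-of-vanishing property: by \eqref{Remainderseries}, $\ord_{t=0}R_l(t)=2l$, so each of $R_l$, $t^2R_{l-2}$ has order $\ge 2l$ while $tR_{l-1}$ has order $\ge 2l-1$; therefore $\mathcal{L}[R](t)$ is a power series of order $\ge 2l-1$. But $\mathcal{L}[P](t)=-\mathcal{L}[R](t)$ is a polynomial of degree $\le l-1$ (since $\deg P_j\le j-1$ and the multiplier $t^2$ raises degrees by at most $2$, giving degree $\le l-1$), and for $l\ge 2$ a polynomial of degree $\le l-1$ that equals a series of order $\ge 2l-1\ge l+1$ must vanish identically. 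Hence $\mathcal{L}[P]=0$, which is the $P_l$ recurrence; the initial values $P_0=0$, $P_1=1$ follow from $P_l=[Q_lE]_{l-1}$. This degree-versus-order squeeze is the conceptually pleasant part of the argument and avoids re-deriving an explicit formula for $P_l$.
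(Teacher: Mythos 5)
Your overall architecture is sound, but as written the centrepiece of your $Q$-part is both misstated and unproved. The recurrence to be verified is $Q_l=(1-(2l-1)t)Q_{l-1}-(l-1)^2t^2Q_{l-2}$, so after your (correct) identification of the coefficient of $(-t)^h$ in each term, the identity to check must carry a \emph{minus} sign on the last summand:
\begin{equation*}
h!\binom{l}{h}^2=h!\binom{l-1}{h}^2+(2l-1)(h-1)!\binom{l-1}{h-1}^2-(l-1)^2(h-2)!\binom{l-2}{h-2}^2 .
\end{equation*}
The plus-sign version you wrote down is false: at $l=h=2$ it reads $2=0+3+1=4$. Moreover, you explicitly defer the verification (``the main obstacle'', ``can be expanded and checked term by term''), so the one computation on which your $Q$-proof rests is absent from the proposal. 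It is, to be fair, exactly as routine as you predict once the sign is fixed: dividing by $(h-1)!\binom{l-1}{h-1}^2$ and using $\binom{l}{h}/\binom{l-1}{h-1}=l/h$, $\binom{l-1}{h}/\binom{l-1}{h-1}=(l-h)/h$, and $\binom{l-2}{h-2}/\binom{l-1}{h-1}=(h-1)/(l-1)$ turns it into
\begin{equation*}
\frac{l^2}{h}=\frac{(l-h)^2}{h}+(2l-1)-(h-1),
\end{equation*}
i.e.\ $l^2=(l-h)^2+2lh-h^2=l^2$, which holds. So your route is salvageable, but as it stands it asserts a false identity and proves nothing in its place.

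In the transfer step there are two smaller slips in the order count: $t^2R_{l-2}$ has order exactly $2(l-2)+2=2l-2$ (not $\ge 2l$), and the combination $(1-(2l-1)t)R_{l-1}$ contains $R_{l-1}$ itself (order $2l-2$), not only $tR_{l-1}$. Hence $\ord_{t=0}\mathcal{L}[R]\ge 2l-2$, not $\ge 2l-1$. The squeeze still closes, since $2l-2>l-1$ for $l\ge 2$, so this is harmless; indeed this half of your argument coincides with the paper's proof, which forms the same linear combination of \eqref{PADE3} at $l$, $l-1$, $l-2$ and plays $\ord\ge 2l-2$ against $\deg\le l-1$.

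Where you genuinely diverge from the paper is in the $Q$-recurrence itself. The paper never manipulates the squared-binomial coefficients: it introduces the Laguerre-type polynomials $L_l(t)=e^tD^l\left(e^{-t}t^l\right)$ together with the auxiliary $K_l(t)=e^tD^l\left(e^{-t}t^{l-1}\right)$, derives the mixed recurrences $K_l=-L_{l-1}+(l-1)K_{l-1}$ and $L_l=tK_l+lL_{l-1}$ by Leibniz's rule, combines them into $L_l=(2l-1-t)L_{l-1}-(l-1)^2L_{l-2}$, and then substitutes $Q_l(t)=(-t)^lL_l\!\left(\frac{1}{t}\right)$. That derivation needs no coefficient identity at all; your comparison of coefficients is more elementary and self-contained, but only becomes a proof once the corrected identity above is actually established.
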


\begin{proof}
First we consider the polynomials $L_l(t)$ together with related polynomials $K_l(t):=e^tD^l\left(e^{-t}t^{l-1}\right)$ 
(both of these are closely connected to the Laguerre polynomials, see \cite{Szego1975}). 
Readily
\begin{equation*}
\begin{split}
K_l(t) &= e^tD^{l-1}D\left(e^{-t}t^{l-1}\right) \\
&= -e^tD^{l-1}\left(e^{-t}t^{l-1}\right) + (l-1)e^tD^{l-1}\left(e^{-t}t^{l-2}\right)\\
& = - L_{l-1}(t) +(l-1) K_{l-1}(t).
\end{split}
\end{equation*}
For $L_l(t)$ we apply again Leibniz's rule:
\begin{align*}
L_l(t) &= e^tD^l\left(e^{-t}t^l\right)\\
&= e^tD^l\left(te^{-t}t^{l-1}\right) \\
&= te^tD^l\left(e^{-t}t^{l-1}\right) + le^tD^{l-1}\left(e^{-t}t^{l-1}\right) \\
&= t K_l(t) + l L_{l-1}(t).
\end{align*}
Consequently,
\begin{equation*}\label{}
L_l(t)  = (2l-1-t) L_{l-1}(t) - (l-1)^2 L_{l-2}(t)
\end{equation*}
with $L_0(t)=1$ and $L_1(t)=1-t$.

The substitution $Q_l(t)=(-t)^l L_l\left(\frac{1}{t}\right)$ confirms \eqref{POLRECURRENCES} for $Q_l(t)$.
Taking linear combinations of the Pad\'e formula \eqref{PADE3} at $l$, $l-1$ and $l-2$  yields
\begin{align*}
  &\left( R_l(t) - (1-(2l-1)t) R_{l-1}(t) + (l-1)^2t^2 R_{l-2}(t) \right) \\
= \; &\left( Q_l(t) - (1-(2l-1)t) Q_{l-1}(t) + (l-1)^2t^2 Q_{l-2}(t) \right) E(t) \\
&- \left( P_l(t) - (1-(2l-1)t) P_{l-1}(t) + (l-1)^2t^2 P_{l-2}(t) \right) \\
= \; &- \left( P_l(t) - (1-(2l-1)t) P_{l-1}(t) + (l-1)^2t^2 P_{l-2}(t) \right),
\end{align*}
where 
$$
\underset{t=0}{\ord}\, \left( R_l(t) - (1-(2l-1)t) R_{l-1}(t) + (l-1)^2t^2 R_{l-2}(t) \right) \ge 2l-2,
$$
$$
\deg \left( P_l(t) - (1-(2l-1)t) P_{l-1}(t) + (l-1)^2t^2 P_{l-2}(t) \right) \le l-1.
$$
Hence 
$$
R_l(t) - (1-(2l-1)t) R_{l-1}(t) + (l-1)^2t^2 R_{l-2}(t) = 0
$$
and
$$ 
P_l(t) - (1-(2l-1)t) P_{l-1}(t) + (l-1)^2t^2 P_{l-2}(t) = 0,
$$
too.
\end{proof}

\section{Continued fractions}\label{Generalized continued fractions}

\subsection{About generalized continued fractions}

Following Lepp\"al\"a et al.\ \cite{Leppala2017}, we give a couple of basic facts about generalized continued fractions. 
See also \cite{Borwein2014}.

A generalized continued fraction is the expression
\begin{equation}\label{GC}
b_0+\frac{a_1}{b_1+\frac{a_2}{b_2+\cdots}},
\end{equation}
shortly denoted by
\[
b_0+\overset{\infty}{\underset{k=1}{\mathbb{K}}}\frac{a_n}{b_n}=
b_0+\frac{a_1}{b_1} \alaplus \frac{a_2}{b_2} \alaplus \aladots \,.
\]
The convergents of \eqref{GC} are defined by
\[
\frac{A_n}{B_n} := b_0+ \overset{n}{\underset{k=1}{\mathbb{K}}} \frac{a_k}{b_k}=b_0+\frac{a_1}{b_1}\alaplus \frac{a_2}{b_2}
\alaplus \aladots \alaplus \frac{a_n}{b_n} \,,
\]
where the numerators $A_n$ and denominators $B_n$ both satisfy the recurrence formula
\begin{equation}\label{ABRECURRENCES}
C_n = b_n C_{n-1} + a_n C_{n-2}
\end{equation}
for $n \in \mathbb{Z}_{\ge 2}$
with initial values $A_0=b_0$, $B_0=1$, $A_1=b_0b_1+a_1$ and $B_1=b_1$. 
By the value of \eqref{GC} we mean the limit
\[
\tau = \lim_{n\to\infty} \frac{A_n}{B_n}
\]
when it exists.
Using the recurrence formula \eqref{ABRECURRENCES} and induction gives
\[
\frac{A_{k+1}}{B_{k+1}}-\frac{A_k}{B_k}=
\frac{(-1)^k a_1a_2\cdots a_{k+1}}{B_kB_{k+1}}
\]
for all $k \in \mathbb{Z}_{\ge 0}$,
which by telescoping implies
\[
\frac{A_n}{B_n}=b_0+\sum_{k=0}^{n-1}\frac{(-1)^ka_1a_2\cdots a_{k+1}}{B_kB_{k+1}}\,.
\]
Supposing the convergence, the limit
\[
\tau=b_0+\sum_{k=0}^{\infty}\frac{(-1)^ka_1a_2\cdots a_{k+1}}{B_kB_{k+1}}
\]
exists, which further implies
\[
\tau-\frac{A_n}{B_n}=\sum_{k=n}^{\infty}\frac{(-1)^ka_1a_2\cdots a_{k+1}}{B_kB_{k+1}}\,.
\]

In the following, we consider the values of continued fractions, where $a_k,b_k\in\mathbb{Q}[t]$ or $a_k,b_k\in\mathbb{Q}$. 
In the first case, the value of the (infinite) continued fraction will be a formal Laurent series in the field $\mathbb{Q}((t))$, 
and in the second case, the value will be in $\mathbb{C}$ or $\mathbb{C}_p$, where $p$ is a prime.

\subsection{A continued fraction for Euler's function and Hardy integral}\label{Acontinuedfraction}

Now we are ready to compute the value of the continued fraction \eqref{contfrac} in the field $\mathbb{Q}((t))$ of formal Laurent series, in the field of $p$-adic numbers and in the field of real numbers, respectively.
Recall the definition of the continued fraction \eqref{contfrac}:
\begin{equation*}\label{}
\frac{1}{1-t-\frac{t^2}{1-3t-\frac{2^2t^2}{1-5t+\ldots} } } := \overset{\infty}{\underset{k=1}{\mathbb{K}}} \left(\frac{a_k}{b_k}\right),
\end{equation*}
where $a_1=1$, $b_1=1-t$, $a_k=(k-1)^2t^2$ and $b_k=1-(2k-1)t$ for $k\in\mathbb{Z}_{\ge 2}$.

\begin{lemma}\label{}
In the field $\mathbb{Q}((t))$ of formal Laurent series,
\begin{equation*}\label{}
\frac{1}{1-t-\frac{t^2}{1-3t-\frac{2^2t^2}{1-5t+\ldots} } } = \sum_{k=0}^{\infty}k!t^k.
\end{equation*}
\end{lemma}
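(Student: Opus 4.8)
The plan is to identify the convergents $A_n/B_n$ of the continued fraction with the Pad\'e approximants $P_n(t)/Q_n(t)$ of $E(t)$, and then to read off the convergence from the order of vanishing of the Pad\'e remainder $R_n(t)$ at the origin. Convergence in the ring of formal Laurent series $\mathbb{Q}((t))$ is convergence in the $t$-adic topology, so everything reduces to controlling $\ord_{t=0}\bigl(E(t)-A_n/B_n\bigr)$.

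First I would record that for this continued fraction $b_0=0$, so the convergent recurrence \eqref{ABRECURRENCES} becomes $C_n = b_n C_{n-1} + a_n C_{n-2}$ for $n\ge 2$ (with $C=A$ or $C=B$), and the initial data are $A_0=0$, $B_0=1$, $A_1=1$, $B_1=1-t$. These initial values coincide with $P_0=0$, $Q_0=1$, $P_1=1$, $Q_1=1-t$ from Lemma \ref{POLREC}. Comparing coefficients, the recurrence \eqref{ABRECURRENCES} is the same as the Pad\'e recurrence \eqref{POLRECURRENCES}: one reads off $b_n=1-(2n-1)t$ and matches the quadratic coefficient against the term $-(n-1)^2t^2$ of \eqref{POLRECURRENCES}, taking due care with the sign. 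A single induction on $n$ then yields $A_n(t)=P_n(t)$ and $B_n(t)=Q_n(t)$ for all $n\ge 0$.

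Next I would invoke the Pad\'e identity \eqref{PADE3}, namely $Q_n(t)E(t)-P_n(t)=R_n(t)$. Since $Q_n(0)=1$, the series $Q_n(t)$ is a unit in $\mathbb{Q}[[t]]$; hence $B_n=Q_n$ never vanishes and each convergent $A_n/B_n=P_n/Q_n$ is a well-defined element of $\mathbb{Q}[[t]]\subset\mathbb{Q}((t))$. Dividing the Pad\'e identity by $Q_n(t)$ gives
\begin{equation*}
E(t)-\frac{A_n(t)}{B_n(t)} = \frac{R_n(t)}{Q_n(t)}.
\end{equation*}
From \eqref{Remainderseries} the remainder satisfies $\ord_{t=0}R_n(t)=2n$ (its lowest-order term is $n!^2 t^{2n}$), and dividing by the unit $Q_n$ preserves the order, so $\ord_{t=0}\bigl(E(t)-A_n/B_n\bigr)=2n$.

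Finally, since $\ord_{t=0}\bigl(E(t)-A_n/B_n\bigr)=2n\to\infty$ as $n\to\infty$, the convergents tend to $E(t)=\sum_{k=0}^\infty k!t^k$ in the $t$-adic topology, which is precisely the asserted value of the continued fraction. I expect the only genuine obstacle to be the bookkeeping in the first step — verifying that the convergent recurrence \eqref{ABRECURRENCES} and its initial data match \eqref{POLRECURRENCES} with the correct signs and indexing; once the identification $A_n=P_n$, $B_n=Q_n$ is secured, the remaining steps are immediate from the already-established order estimate on $R_n$. As a consistency check one can compute the first convergents directly and confirm agreement with $1+t+2t^2+6t^3+\cdots$, which also pins down the sign of $a_n$.
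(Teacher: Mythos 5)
Your proposal is correct and follows essentially the same route as the paper: identify the convergents $A_n/B_n$ with the Pad\'e approximants $P_n/Q_n$ by matching the recurrence \eqref{ABRECURRENCES} against \eqref{POLRECURRENCES} together with the initial values, then conclude $t$-adic convergence from $\ord_{t=0} R_n(t) = 2n \to \infty$ via the identity \eqref{PADE3}. The only (welcome) additions are your explicit remark that $Q_n(0)=1$ makes $Q_n$ a unit in $\mathbb{Q}[[t]]$, so division preserves the order, and your attention to the sign of $a_n$ — points the paper leaves implicit.
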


\begin{proof}
The Pad\'e approximations
\begin{equation}\label{PADE5}
E(t) - \frac{P_{l}(t)}{Q_{l}(t) } =\frac{R_{l}(t)}{Q_{l}(t) } 
                                  =l!^2 t^{2l}\,\frac{\sum_{k=0}^{\infty} k!\binom{l+k}{k}^2t^k}{\sum_{h=0}^{l}h!\binom{l}{h}^2(-t)^h}
\end{equation}
from Lemma \ref{EULERPADE} show that
\begin{equation*}\label{order}
\underset{t=0}{\ord} \left( E(t) - \frac{P_{l}(t)}{Q_{l}(t)} \right) = 2l.
\end{equation*}
Thus, as formal Laurent series
\begin{equation*}\label{}
\lim_{l\to\infty} \frac{P_{l}(t)}{Q_{l}(t)} = E(t).
\end{equation*}

On the other hand, it happens that $\frac{P_{l}(t)}{Q_{l}(t)}$ is the $l$th convergent of the continued fraction
\begin{equation}\label{ContHappens}
\frac{1}{1-t-\frac{t^2}{1-3t-\frac{2^2t^2}{1-5t+\ldots} } }.
\end{equation}
Namely, according to Lemma \ref{POLREC}, the Pad\'e polynomials $P_l(t)$ and  $Q_l(t)$ satisfy the recurrences
$$
P_l(t) = (1-(2l-1)t) P_{l-1}(t) - (l-1)^2t^2 P_{l-2}(t),
$$
$$
Q_l(t) = (1-(2l-1)t) Q_{l-1}(t) - (l-1)^2t^2 Q_{l-2}(t)
$$
with the initial values 
$P_0(t)=0$, $P_1(t)=1$, $Q_0(t)=1$ and $Q_1(t)=1-t$.
From \eqref{ContHappens} we see that
$A_0=0$, $A_1=1$, $B_0=1$, and $B_1(t)=1-t$, corresponding to the above initial values.
Hence, by comparison to \eqref{ABRECURRENCES}, $P_l(t)=A_{l}(t)$ and $Q_l(t)=B_{l}(t)$ for all $l\in\mathbb{Z}_{\ge 0}$, implying
\begin{equation*}\label{}
\lim_{l\to\infty} \frac{P_{l}(t)}{Q_{l}(t)} =\lim_{l\to\infty}\frac{A_{l}(t)}{B_{l}(t)} =
 \frac{1}{1-t-\frac{t^2}{1-3t-\frac{2^2t^2}{1-5t+\ldots} } }.
\end{equation*}
\end{proof}

\paragraph{\textbf{Proof of Theorem \ref{CFpadic+real}.}} 
First we prove the $p$-adic evaluation \eqref{pcontinued}:
\begin{proof}[Proof of Theorem \ref{CFpadic+real}, $p$-adic case]
Fix now $t\in\mathbb{C}_p$, $|t|_p<1$. 
Let $\left(\frac{A_l(t)}{B_l(t)}\right)$ be the sequence of convergents  of the continued fraction in \eqref{pcontinued}.
The assumption $|t|_p<1$ implies
\begin{equation*}\label{}
|Q_l(t)|_p=\left| \sum_{h=0}^{l}h!\binom{l}{h}^2(-t)^h \right|_p = 1, \quad 
\left| \frac{R_{l}(t)}{l!^2 t^{2l}}\right|_p =\left| \sum_{k=0}^{\infty} k!\binom{l+k}{k}^2t^k\right|_p = 1.
\end{equation*}
The facts $P_l(t)=A_{l}(t)$, $Q_l(t)=B_{l}(t)$ and \eqref{PADE5} then confirm the limit
\begin{equation}\label{ketjusuppeneeeuler}
\begin{split}
\left| E(t) - \frac{A_{l}(t)}{B_{l}(t)} \right|_p & = \left| E(t) - \frac{P_{l}(t)}{Q_{l}(t)} \right|_p 
= \left| \frac{R_{l}(t)}{Q_{l}(t)} \right|_p \\
&= \left| l!^2 t^{2l}\,\frac{\sum_{k=0}^{\infty} k!\binom{l+k}{k}^2t^k}{\sum_{h=0}^{l}h!\binom{l}{h}^2(-t)^h} \right|_p
= \left| l!^2 t^{2l} \right|_p\ \overset{l \to \infty}{\to} 0.
\end{split}							
\end{equation}
\end{proof}


Next, we prove the Archimedean  evaluation \eqref{rcontinued}.
Let $t\in\mathbb{R}$, $t\le 0$. Then, in the field of real numbers,
\begin{equation*}\label{}
\frac{1}{1-t-\frac{t^2}{1-3t-\frac{2^2t^2}{1-5t+\ldots} } } = \int_{0}^{\infty} \frac{e^{-s}}{1-ts}\d s.
\end{equation*}
\begin{proof}[Proof of Theorem \ref{CFpadic+real}, real case]
Let us apply identity \eqref{Hardyapproximation},
\begin{equation*}\label{}
Q_l(t) \mathcal{H}(t) - P_l(t) = \mathcal{R}_l(t), 
\end{equation*}
from the proof of Lemma \ref{HARDYAPPROX}. 
Write $x=-t>0$; then
\begin{equation*}\label{}
|Q_l(t)|= \sum_{h=0}^{l} h! \binom{l}{h}^2 x^h \ge l!x^l,
\end{equation*}
and, as in \eqref{remainderestimate},
\begin{equation*}\label{}
|\mathcal{R}_l(t)| = l! x^{2l} \int_{0}^{\infty} \frac{e^{-s}s^l}{(1+xs)^{l+1}}\d s \le l! x^{l} \frac{l^l}{(l+1)^{l+1}}. 
\end{equation*}
Analogously to \eqref{ketjusuppeneeeuler}, we arrive at the limit
\begin{equation*}\label{}
\left| \mathcal{H}(t) - \frac{A_{l}(t)}{B_{l}(t)} \right| 
= \left| \mathcal{H}(t) - \frac{P_{l}(t)}{Q_{l}(t)} \right| 
= \left| \frac{\mathcal{R}_l(t)}{Q_{l}(t)} \right|
\le \frac{l^l}{(l+1)^{l+1}} \rightarrow\ 0.							
\end{equation*}
(Just remember that $\mathcal{H}(t)= \int_{0}^{\infty} \frac{e^{-s}}{1-ts}\d s$.)
\end{proof}


\section{Determinant}

\begin{lemma}\label{detlemma}
The determinant $\begin{vmatrix}
Q_l(t) & P_l(t) \\
Q_{l+1}(t) & P_{l+1}(t) 
\end{vmatrix}$
is non-zero.
\end{lemma}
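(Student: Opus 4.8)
The plan is to prove the stronger, fully explicit statement that
\[
\begin{vmatrix} Q_l(t) & P_l(t) \\ Q_{l+1}(t) & P_{l+1}(t)\end{vmatrix} = (l!)^2 t^{2l},
\]
which is not the zero polynomial, so the determinant vanishes only at $t=0$ and is non-zero for every $t \neq 0$. Write $\Delta_l := Q_l(t)P_{l+1}(t) - P_l(t)Q_{l+1}(t)$. The only tool needed is the fact, established in Lemma \ref{POLREC}, that $P_\bullet$ and $Q_\bullet$ obey the \emph{same} three-term recurrence with the same coefficients; this is precisely what forces a Casorati-type (Wronskian-type) collapse of the determinant.

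First I would substitute the recurrences $P_{l+1} = (1-(2l+1)t)P_l - l^2 t^2 P_{l-1}$ and $Q_{l+1} = (1-(2l+1)t)Q_l - l^2 t^2 Q_{l-1}$ into $\Delta_l$. Expanding $Q_l P_{l+1} - P_l Q_{l+1}$, the common factor $1-(2l+1)t$ multiplies $Q_l P_l - P_l Q_l = 0$ and drops out, leaving
\[
\Delta_l = -l^2 t^2\bigl(Q_l P_{l-1} - P_l Q_{l-1}\bigr) = l^2 t^2\bigl(Q_{l-1}P_l - P_{l-1}Q_l\bigr) = l^2 t^2\,\Delta_{l-1}.
\]
The base case is immediate from the initial values: $\Delta_0 = Q_0 P_1 - P_0 Q_1 = 1\cdot 1 - 0\cdot(1-t) = 1$. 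Iterating the relation $\Delta_l = l^2 t^2 \Delta_{l-1}$ down to $\Delta_0$ then yields $\Delta_l = \prod_{j=1}^{l} j^2 t^2 = (l!)^2 t^{2l}$, which is a non-zero polynomial for every $l \ge 0$ and in particular is non-zero whenever $t \neq 0$.

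Alternatively, the same identity drops out of the general continued-fraction determinant formula recorded in Section \ref{Acontinuedfraction}: since $P_l = A_l$ and $Q_l = B_l$ are the numerators and denominators of the convergents, one has $A_{l+1}B_l - A_l B_{l+1} = (-1)^l a_1 a_2 \cdots a_{l+1}$ with $a_1 = 1$ and $a_k = -(k-1)^2 t^2$, and the two powers of $(-1)$ cancel to reproduce $(l!)^2 t^{2l}$; this serves as a useful consistency check. I do not expect a genuine obstacle here: the argument is a one-line induction whose entire content is the vanishing of the $(1-(2l+1)t)$ term, so the only care required is the bookkeeping of the two sign changes, one from the $-l^2 t^2$ in the recurrence and one from transposing the rows of the $2\times 2$ determinant.
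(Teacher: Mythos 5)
Your proof is correct, and it takes a genuinely different route from the paper's. The paper argues directly from the Pad\'e identity \eqref{PADE3}: substituting $P_l = Q_lE - R_l$ and $P_{l+1} = Q_{l+1}E - R_{l+1}$ into the determinant makes the terms involving $E(t)$ cancel, leaving $Q_l(t)P_{l+1}(t) - Q_{l+1}(t)P_l(t) = Q_{l+1}(t)R_l(t) - Q_l(t)R_{l+1}(t)$; since the left-hand side is a polynomial of degree at most $2l$ while the right-hand side has order at least $2l$ at $t=0$, the determinant must equal $C\,t^{2l}$, and the constant $C = l!^2$ is read off from $Q_{l+1}(0)=1$ and the leading term of the series \eqref{Remainderseries}. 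You instead invoke the shared three-term recurrence of Lemma \ref{POLREC} and run a Casoratian (Wronskian-type) telescoping, $\Delta_l = l^2t^2\,\Delta_{l-1}$ with $\Delta_0 = 1$, to get $\Delta_l = (l!)^2t^{2l}$; your index bookkeeping and signs check out, and the base case matches the initial values $P_0=0$, $P_1=1$, $Q_0=1$, $Q_1=1-t$. Both routes land on the same explicit value, so both show the determinant vanishes only at $t=0$, which is what the application in the proof of Theorem \ref{theoremX} needs (there $t$ is a non-zero integer). What each buys: the paper's argument is the standard Pad\'e-theoretic one from \cite{TAWA2018} and needs the degree bound $\deg P_l \le l-1$ and the order of $R_l$; yours is pure algebra once Lemma \ref{POLREC} is in hand and pins down the sign of the constant transparently --- though note that the paper's own proof of the $P_l$-recurrence in Lemma \ref{POLREC} rests on exactly the same degree-versus-order cancellation, so the total logical content is comparable and your proof essentially relocates that step into an already-proved lemma. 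One small remark on your consistency check: you take $a_k = -(k-1)^2t^2$, whereas the paper's text after \eqref{contfrac} states $a_k=(k-1)^2t^2$; your sign is the one actually consistent with the minus signs displayed in \eqref{contfrac} and with the recurrence \eqref{ABRECURRENCES} (otherwise the identification $P_l=A_l$, $Q_l=B_l$ would fail), so the check is valid and incidentally flags a sign slip in the paper's notation.
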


\begin{proof}
We follow \cite{TAWA2018}: From the identity
$$
\begin{vmatrix}
Q_l(t) & P_l(t) \\
Q_{l+1}(t) & P_{l+1}(t) 
\end{vmatrix}
= -
\begin{vmatrix}
Q_l(t) & R_l(t) \\
Q_{l+1}(t) & R_{l+1}(t) 
\end{vmatrix}
$$
based on \eqref{PADE3}, we get
\begin{equation}\label{detidentity}
Q_l(t) P_{l+1}(t) - Q_{l+1}(t) P_l(t) = Q_{l+1}(t) R_l(t) - Q_l(t) R_{l+1}(t).
\end{equation}
Because the degree of the left-hand side is at most $2l$, and the order of the right-hand side is at least $2l$, \eqref{detidentity} must be equal to a constant times $t^{2l}$. Furthermore, from the right-hand side of \eqref{detidentity}, we can deduce that this constant is $l!^2$.
\end{proof}

\section{Proofs of the results}

\subsection{Proof of Theorem \ref{theoremX}}

Denoting
$$
q_l := Q_l (t), \quad p_l := P_l (t), \quad r_l := R_l (t),
$$
Lemma \ref{EULERPADE} gives
$$
q_l E(t) - p_l = r_l.
$$
Then our linear form \eqref{linform} becomes
$$
q_l \Lambda_p = c q_l E_p \left(t\right) - q_l d = c p_l - d q_l + c r_l.
$$
Denote
$$
W_l:= c p_l - d q_l = q_l \Lambda_p - c r_l,
$$
where $c p_l - d q_l \in \Z$. From Lemma \ref{detlemma} it follows that $W_{\hat l} \neq 0$ for $\hat l=l$ or $\hat l=l+1$. 
Hence, for such $\hat l$, the product formula implies

\begin{align*}
1 = \left| W_{\hat l} \right| \prod_{q \in \mathbb{P}} \left| W_{\hat l} \right|_q 
= \left| W_{\hat l}\right| \left| q_{\hat l} \Lambda_p - c r_{\hat l}\right|_p \prod_{q \in \mathbb{P}, q\ne p} \left| W_{\hat l} \right|_q 
\le \left| W_{\hat l} \right| \max\left\{ \left| \Lambda_p \right|_p, \left| r_{\hat l} \right|_p \right\}.
\end{align*}

From Lemmas \ref{Ppolraja2}, \ref{polrajaQ}, and \ref{jaannoskoko}, we get
$$
|p_{\hat l}| \le 2 (l+1)! B(l+1,t)  |t|^{l+1} e^{2\sqrt{\frac{l+1}{|t|}}}, \quad |q_{\hat l}| \le (l+1)!  |t|^{l+1} e^{2\sqrt{\frac{l+1}{|t|}}}, 
\quad |r_{\hat l}|_p = |l!|_p^2 |t|_p^{2l}.
$$
So
\begin{equation*}\label{1lessthan}
\begin{split}
1 &\le \; \left| c p_{\hat l} - d q_{\hat l} \right| \cdot \max\left\{ \left| \Lambda_p \right|_p, \left| r_{\hat l} \right|_p \right\} \\
  &\le \;  (|c|+|d|) \cdot 2 (l+1)! B(l+1,t)  |t|^{l+1} e^{2\sqrt{\frac{l+1}{|t|}}} \cdot  
	         \max\left\{ \left| \Lambda_p \right|_p, |l!|_p^2 |t|_p^{2l}\right\} \\
	           &\le \;  H \cdot 2 (l+1)! B(l+1,t)  |t|^{l+1} e^{2\sqrt{\frac{l+1}{|t|}}} \cdot  
	         \max\left\{ \left| \Lambda_p \right|_p, |t|_p^{2l}\right\}.
\end{split}
\end{equation*}
Since by \eqref{paehto2} $H \cdot 2 (l+1)! B(l+1,t)  |t|^{l+1} e^{2\sqrt{\frac{l+1}{|t|}}} |t|_p^{2l}<1$, we must have $1 \le H \cdot 2 (l+1)! B(l+1,t)  |t|^{l+1} e^{2\sqrt{\frac{l+1}{|t|}}} \cdot | \Lambda_p |_p$ and furthermore
$$
1 \le H \cdot 2 (l+1)! B(l+1,t)  |t|^{l+1} e^{2\sqrt{\frac{l+1}{|t|}}} \cdot | \Lambda_p |_p < |t|_p^{-2l} \cdot |\Lambda_p|_p.
$$
\begin{flushright}
\qed
\end{flushright}

\subsection{Proof of Corollary \ref{theorem1}}

Now $t=\pm p^a$, where $a$ is such that
\begin{equation*}
p^a > \left( H \cdot 2 (l+1)! B(l+1,p^a) e^{2\sqrt{\frac{l+1}{p^a}}} \right)^{\frac{1}{l-1}},
\quad B(l,p^a)= \left( \frac{lp^a}{4} \right)^{\frac{1}{4}}+ \left( \frac{lp^a}{4} \right)^{-\frac{1}{4}}.
\end{equation*}
Therefore,
\begin{equation*}\label{}
H \cdot 2 (l+1)! B(l+1,t) \cdot |t|^{l+1} e^{2\sqrt{\frac{l+1}{|t|}}} \cdot |t|_p^{2l} = H \cdot 2 (l+1)! B(l+1,p^a) e^{2\sqrt{\frac{l+1}{p^a}}} p^{-a(l-1)} < 1,
\end{equation*}
confirming assumption \eqref{paehto2} of Theorem \ref{theoremX}. Hence
\begin{equation*}\label{}
\left| c E_p\!\left(\pm p^a\right)-d \right|_p \ge  \frac{1}{p^{2la}}
\end{equation*}
follows by Theorem \ref{theoremX}.
\begin{flushright}
\qed
\end{flushright}

\subsection{Proof of Corollary \ref{Cordiminish}}

We will use Corollary \ref{theorem1} to prove this corollary. Now we choose $l:=\left\lfloor \frac{\log H}{16} \right\rfloor$. 
It is sufficient to notice that $l\geq 4$. Therefore $B(l+1,\pm p^a)\le \sqrt{2} \ (l+1)^{\frac{1}{4}} |p^a|^{1/4}$.

 Then
\begin{equation}\label{lRestimate}
\frac{\log H}{16} - 1 < l \le \frac{\log H}{16}.
\end{equation}
 We wish to show that
$$
p^a > \left( H \cdot 2\, (l+1)!\, \sqrt{2} (l+1)^{\frac{1}{4}}\, e^{2\sqrt{\frac{l+1}{2}}} \right)^{\frac{1}{l-\frac{5}{4}}} =: \rho (l),
$$
because since
\[
\left( H \cdot 2\, (l+1)!\, \sqrt{2} (l+1)^{\frac{1}{4}}\, e^{2\sqrt{\frac{l+1}{2}}} \right)^{\frac{1}{l-\frac{5}{4}}}\geq 
\left( H \cdot 2 (l+1)! B(l+1,p^a) e^{2\sqrt{\frac{l+1}{p^a}}} \right)^{\frac{1}{l-1}},
\]
then condition \eqref{paehto} of Corollary \ref{theorem1} follows.

We use the fact that $n!\le (n-1)^{n-1}$ for $n\ge 4$ and the bound $l-\frac{5}{4}\geq \frac{11}{16}l$ . We obtain
\begin{align*}
\log (\rho (l)) &=\frac{\log(2H)+\log((l+1)!)+\frac{1}{4}\log(l+1)+\log(\sqrt{2})+2\sqrt{\frac{l+1}{2}}}{l-\frac{5}{4}} \\
&\leq \frac{\log (2H)}{\frac{11}{16}l}+\frac{l\log l}{l-\frac{5}{4}}+\frac{2\sqrt{\frac{l+1}{2}}+\frac{1}{4}\log(l+1)+
\log \sqrt{2}}{\frac{11}{16}l}.
\end{align*}

We have
\[
\frac{l\log l}{l-\frac{5}{4}}-\log l=\frac{\frac{5}{4}\log l}{l-\frac{5}{4}}\leq \frac{20\log l}{11l} \leq \frac{5\log 4}{11}
\]
and
\[
\frac{2\sqrt{\frac{l+1}{2}}+\frac{1}{4}\log(l+1)+\log \sqrt{2}}{\frac{11}{16}l}\leq \frac{2\sqrt{\frac{4+1}{2}}+\frac{1}{4}\log(4+1)+
\log \sqrt{2}}{\frac{11}{16}\cdot 4}\leq 1.42226
\]
for $l\geq 4$.

Hence
\begin{align*}
\log (\rho (l)) &\leq \frac{\log (2H)}{\frac{11}{16}l}+\log l+ \frac{5\log 4}{11} +1.42226\\ & \leq \frac{\log H}{\frac{11}{16}l}+
\log\left(\frac{\log H}{16}\right)+\frac{\log 2}{\frac{11}{16}l}+ \frac{5\log 4}{11} +1.42226\\ 
& \leq \frac{\log H}{\frac{11}{16}\left(\frac{\log H}{16}-1\right)}+\log\log H+\frac{4\log 2}{11}+ \frac{5\log 4}{11} +1.42226-\log 16\\ 
& =\frac{16^2}{11}+\frac{16^3}{11(\log H-16)}+\log\log H+\frac{4\log 2}{11}+ \frac{5\log 4}{11} +1.42226-\log 16\\ 
& \leq \log\log H+22.9302\leq 4\log\log H
\end{align*}
since $\log\log  H\geq 8$. This proves the bound 
$p^a > \left( H \cdot 2\, (l+1)!\, \sqrt{2} (l+1)^{\frac{1}{4}}\, e^{2\sqrt{\frac{l+1}{2}}} \right)^{\frac{1}{l-\frac{5}{4}}} $ 
because $a$ was assumed to satisfy the bound $p^a>(\log H)^4$.


By Corollary \ref{theorem1},  
\begin{equation*}\label{}
\left|c E_p(p^a)-d \right|_p \ge \frac{1}{p^{2al}} \ge \frac{1}{ p^{\frac{a}{8}\log H} }.
\end{equation*}
\begin{flushright}
\qed  
\end{flushright}

\subsection{Proof of Corollary \ref{Cormin}}

Again we have $t=\pm p^a$ and thus $|t|\ge 2$. 
Let us assume $l\ge 2$; then $B(l+1,\pm p^a)\le \sqrt{2} \ (l+1)^{\frac{1}{4}} |p^a|^{1/4}$. Again, we wish to show that
\begin{equation}\label{assumption2}
p^a > \left( H \cdot 2\, (l+1)!\, \sqrt{2} (l+1)^{\frac{1}{4}}\, e^{2\sqrt{\frac{l+1}{2}}} \right)^{\frac{1}{l-\frac{5}{4}}} =: \rho (l).
\end{equation}
In this proof we are going to first bound upwards the value of $\rho(l)$ and then choose such a value of $l$ that this upper bound is as small as possible. The choice we will make will be $l= \left\lceil \frac{16}{11}\log(2H) \right\rceil$, which is at least 4 due to the assumption $H \ge 4$. Therefore, throughout the proof, we may actually assume that $l \ge 4$.

We estimate in a fairly similar manner as in the proof of the previous corollary:
\begin{equation}\label{logrho}
\begin{split}
\log \rho (l) &\le \frac{1}{l-\frac{5}{4}} \left( \log(2H) + l \log l + \log \sqrt{2} + \frac{1}{4} \log (l+1) + 2\sqrt{\frac{l+1}{2}} \right) \\
&\le \frac{\log(2H)}{l-\frac{5}{4}} + \log l + \frac{5}{4} \cdot \frac{\log l}{l - \frac{5}{4}} + \frac{\log \sqrt{2}}{4-\frac{5}{4}} + \frac{\log 5}{4 \cdot 4 -5} + \frac{2 \sqrt{\frac{5}{2}}}{4 - \frac{5}{4}} \\
&\le \frac{\log(2H)}{\frac{11}{16}l} + \log l + \frac{5 \log 4 + 4\log \sqrt{2} + \log 5 + 8 \sqrt{\frac{5}{2}}}{11}, \\
\end{split}
\end{equation}
where the right-hand side attains its minimum at $l=\frac{16}{11}\log(2H)$. Hence we choose 
$l= \left\lceil \frac{16}{11}\log(2H) \right\rceil$.
Note that
$$
\frac{16}{11}\log(2H) \le l < \frac{16}{11}\log(2H) + 1 = \frac{16}{11} \log \left( 2H e^{\frac{11}{16}} \right).
$$
It follows that
\begin{equation*}\label{}
\log\rho(l) \le 1 + \log \log \left(2H e^{\frac{11}{16}} \right) + \log \frac{16}{11} + \frac{5 \log 4 + 4\log \sqrt{2} + \log 5 + 8 \sqrt{\frac{5}{2}}}{11} < a\log p
\end{equation*}
by assumption \eqref{paehto4}. Hence \eqref{assumption2} holds for $l = \left\lceil \frac{16}{11}\log(2H) \right\rceil$.
Therefore, by Corollary \ref{theorem1}, we obtain
\begin{equation*}\label{}
\left| \Lambda_p \right|_p \ge \frac{1}{p^{2al}} > p^{-2a \cdot \frac{16}{11} \log \left( 2H e^{\frac{11}{16}} \right)} = \frac{1}{\left( 2H e^{\frac{11}{16}} \right)^{\frac{32}{11} a \log p}}.
\end{equation*}
\begin{flushright}
\qed
\end{flushright}

\section*{Acknowledgements}

The authors would like to thank the anonymous referees for their very careful reading of the manuscript and detailed comments.

The research of Ernvall-Hyt\"onen and Sepp\"al\"a was supported by the Emil Aaltonen Foundation. 
A big part of the work of Sepp\"al\"a was conducted during her time at Aalto University supported by a grant from the Magnus Ehrnrooth Foundation.

\end{document}